\documentclass[11pt, a4paper]{amsart}
 
\usepackage{amsmath,amssymb,amsthm,amscd, amsxtra, amsfonts,graphicx,fancybox}
\usepackage{latexsym}
\usepackage{color}
\usepackage[pdfstartview=FitH]{hyperref}
\usepackage{tikz-cd}
\usepackage{enumerate}
\usepackage{subcaption}
\usepackage{adjustbox}
\usepackage{wasysym}

\newcommand{\cC}{\mathcal{C}}

\newtheorem{theorem}{Theorem}[section]
\newtheorem*{theorem*}{Theorem}
\newtheorem{lemma}[theorem]{Lemma}
\newtheorem{definition}[theorem]{Definition}

\newtheorem{proposition}[theorem]{Proposition}
\newtheorem*{proposition*}{Proposition}
\newtheorem{corollary}[theorem]{Corollary}
\newtheorem*{corollary*}{Corollary}

\newtheorem{remark}[theorem]{Remark}
\def \<{\langle}
\def \>{\rangle}

\def \a{\alpha }

\def \l{\lambda }

\def \o{\omega }
\def \1{\mathbf 1}
\def \I{I_{\geq2}}

\DeclareMathOperator{\Res}{Res}
\DeclareMathOperator{\spn}{span}

\newcommand{\HVir}{\mathcal{H}}
\newcommand{\W}{\mathcal{W}}

\newcommand{\LH}{L^\HVir}
\newcommand{\LW}{L^\W}

\newcommand{\ben}{\begin{enumerate}}
\newcommand{\een}{\end{enumerate}}
\newcommand{\be}{\begin{equation}}
\newcommand{\ee}{\end{equation}}
\newcommand{\wt}{{\rm {wt}}}

\newcommand{\hpr}{h[p,r]}
\newcommand{\vpr}{v_{p,r}}
\newcommand{\Z}{\mathbb Z}
\newcommand{\N}{{\mathbb Z}_{>0} }
\newcommand{\Zp}{{\mathbb Z}_{\ge 0} }
\newcommand{\C}{\mathbb C}

\usepackage{comment}

\newcommand{\nn}{\nonumber \\}
\newcommand{\ec}{\Circle}
\newcommand{\bcr}{\CIRCLE}
 
\usepackage{amssymb}

\title{ The tensor category for  $W(2,2)$--vertex algebra}
    \author[   Adamovi\' c,  Peng,  Radobolja,  Yang ]{ Dra\v zen Adamovi\' c, Mingjie Peng, Gordan Radobolja, Jinwei Yang }
\date{}

\begin{document}

\maketitle

\begin{abstract}
This paper studies the category $\cC$ of grading-restricted $C_1$-cofinite generalized modules for the vertex operator algebra associated to the $W$-algebra $W(2,2)$. We first show that $\cC$ is the same as the category of finite length modules whose simple composition factors are the irreducible highest weight $W(2,2)$--modules $L[r]$ of highest weight $(\frac{1-r}{2}, 0)$ for $r \in \Z_{> 0}$, and hence $\cC$ carries a braided tensor category structure.    Then we prove the fusion rules for the simple objects $L[r]$ are governed by the $\mathfrak{sl}_2$ Clebsch--Gordan rule. In particular, we prove
\[
L[r]\boxtimes L[s]\cong \bigoplus_{i=0}^{\min\{r,s\}-1} L[r+s-1-2i].
\]
Using the fusion rules and a recent result of Etingof--Penneys, we establish the rigidity of $\cC$. We also show the semisimple subcategory generated by the simple objects $L[r]$ is tensor equivalent to the category Rep $\mathfrak{sl}_2$ of finite dimensional $\mathfrak{sl}_2$-modules.
\end{abstract}

\section{Introduction}
The W-algebra $W(2,2)$ was introduced in a 2009 paper \cite{Zhang-Dong} classifying simple vertex operator algebras of the moonshine type generated by a two-dimensional weight-two subspace. The same algebra appears in physics under several names, most notably the Bondi--Metzner--Sachs algebra $\mathfrak{bms}_3$ and the Galilean Conformal Algebra ($\mathrm{GCA}_2$), which is a non-relativistic contraction of two copies of the Virasoro algebra. This contraction procedure has since been used to construct new $W$-algebras, known as Galilean $W$-algebras.

The representation theory of $W(2,2)$ has since been extensively studied; see \cite{AR1,AR2,AR3,JLPZ,JZ,R}. Highest weight modules are naturally parametrized by two parameters. The most interesting cases (known as atypical) occur when both parameters are positive integers, which is exactly when the corresponding Verma module has a subsingular vector. It was shown in \cite{AR1,AR2,AR3} that the simple vertex operator algebra associated to $W(2,2)$, which we denote by $\LW_c$, embeds into the twisted Heisenberg--Virasoro algebra $\mathcal{H}$ at level zero, which yields a free-field realization via a rank-two Heisenberg algebra and certain screening operators. Both of these algebras are neither rational nor $C_2$-cofinite. While the known fusion rules for $\mathcal{H}$ provide some intertwining operators among $\LW_c$-modules, the fusion rules for $\LW_c$ itself have remained unknown.

More generally, it is a natural question to construct and study braided tensor category structures on appropriate representation categories of the $W(2,2)$ vertex operator algebra $\LW_c$. The existence of such a braided tensor category and its rigidity are still a major problem for the vertex operator algebras that are neither rational nor $C_2$-cofinite. The main difficulty is that representation categories of such vertex operator algebras are usually neither finite nor semisimple. Recently, the category of grading-restricted $C_1$-cofinite generalized modules has been studied intensively as it has a few nice properties with respect to the tensor product. In fact, Huang \cite{H} proved the category of grading-restricted $C_1$-cofinite generalized modules is a braided monoidal category, it still remains to prove this category is an abelian category in order to obtain a braided tensor category structure. 

In \cite{C.et.al} and \cite{ALY}, by showing that the category of grading-restricted $C_1$-cofinite generalized modules is the same as the category of finite length modules whose simple composition factors are irreducible $C_1$-cofinite modules, the authors proved that there is a braided tensor category structure on the category of grading-restricted $C_1$-cofinite generalized modules for the Virasoro algebra and for the $\Z_2$-orbifold of the Heisenberg vertex operator algebra, respectively. The latter is also an extension of the Virasoro algebra, named as $W(2,4)$ in the literature. This work motivates us to construct a braided tensor category from the $W(2,2)$ vertex operator algebra. 

We first show the atypical irreducible modules $L[r]$, $r\in\N$, of highest weights $\left(\tfrac{1-r}{2},0\right)$ are the only $C_1$-cofinite irreducible grading-restricted generalized $\LW_c$-modules. Then we prove the category $\cC$ of grading-restricted $C_1$-cofinite generalized modules coincides with the category of finite-length modules whose composition factors are isomorphic to $L[r]$ for $r \in \N$. As a consequence, the category $\cC$ is an abelian category and hence admits a braided tensor category structure. 

We also determine the fusion rules for the simple objects by using Zhu's theory together with the known fusion rules for $\mathcal{H}$-modules. It turns out that the fusion rules are given by the Clebsch--Gordan rule for $\mathfrak{sl}_2$. The fusion rules are very useful and tell much more information about the category $\cC$. Firstly, combined with a recent result of Etingof and Penneys \cite{EP}, we establish the rigidity of $\cC$. Secondly, using the fusion rules and the rigidity, together with Kazhdan-Wenzl's result (\cite[Theorem~$A_\infty$]{KW}), we prove the semisimple subcategory $\cC^s$ consisting of direct sums of the simple objects $L[r]$, $r \in \N$ is tensor equivalent to the category Rep $\mathfrak{sl}_2$ by comparing the intrinsic dimension of the generating object $L[2]$ and that of the standard representation of the quantum group over $\mathfrak{sl}_2$.

The structure of the paper is as follows: In Section 2 we recall the structure of highest weight $W(2,2)$-modules with an emphasis on singular and subsingular vectors in the Verma modules. A partial formula for the subsingular vectors, established in Section 2, underlies the classification of $C_1$-cofinite modules that we carry out in Section 3. Furthermore, we describe the category $\cC$ of $C_1$-cofinite grading-restricted generalized modules and show that it admits the structure of a braided tensor category. In Section 4 we determine the fusion rules for the simple objects and show that $\cC$ is rigid. In Section 5, we prove the semisimple subcategory generated by the simple objects is tensor equivalent to Rep $\mathfrak{sl}_2$.

\section{The algebra $W(2,2)$}
Galilean Conformal Algebra (GCA) or $W(2,2)$ is a Lie algebra
\[\W=\spn_\C\{L(n), W(n),C_{L},C_{W}:n\in\Z\}\]
with a Lie bracket
\begin{align*}
\left[L(n),L(m)\right] & =(n-m)L(n+m)+\delta_{n,-m}\frac{n^{3}-n}{12}C_{L},\\
\left[L(n),W(m)\right] & =(n-m)W(n+m) +\delta_{n,-m}\frac{n^{3}-n}{12}C_{W},\\
\left[W(n),W(m)\right] & =\left[\cdot,C_{L}\right] =\left[\cdot,C_{W}\right] =0.
\end{align*}
There is a natural triangular decomposition
$$\W=\W^-\oplus\W^0\oplus\W^+$$
where
\begin{align*}
\W^{\pm}&=\spn_\C\{L(n),W(n):\pm n>0\}\\
\W^0&=\spn_\C\{L(0),W(0),C_L,C_W\}
\end{align*}
Let also 
\[
\W^{\le0}=\W^-\oplus\W^0,\qquad\W^{\ge0}=\W^0\oplus\W^+.
\]
Consider a one-dimensional $\W^{\ge0}$-module $\C v_{c_L,c_W,h_L,h_W}$, with trivial action of $\W^+$ and 
\begin{align*}
&L(0)v_{c_L,c_W,h_L,h_W}=h_Lv_{c_L,c_W,h_L,h_W},\quad W(0)v_{c_L,c_W,h_L,h_W}=h_Wv_{c_L,c_W,h_L,h_W},\\
&C_Lv_{c_L,c_W,h_L,h_W}=c_Lv_{c_L,c_W,h_L,h_W},\quad C_Wv_{c_L,c_W,h_L,h_W}=c_Wv_{c_L,c_W,h_L,h_W}
\end{align*}
for $h_L,h_W,c_L,c_W\in\C$.
Denote by $$V(c_L,c_W,h_L,h_W)=U(\W)\otimes_{U(\W^+\oplus\W^0)}\C v_{c_L,c_W,h_L,h_W}$$ the Verma module of the highest weight $(h_L,h_W)$ and the central charge $(c_L,c_W)$. Note that the action of $W(0)$ is not semi-simple.

For any $t\in\C^\times$ there is an automorphism $\sigma_t$ of $\W$ such that
\[
\sigma(L(n))=L(n),\quad\sigma(W(n))=tW(n),\quad\sigma(C_L)=C_L,\quad\sigma(C_W)=tC_W
\]
so if $c_Lc_W\ne0$ we may choose $t=\tfrac{c_L}{c_W}$ and normalize $c_W$ to $c_L$. Throughout this paper, we assume that the central charge is non-trivial (i.e.\ $c_L=c_W\neq0$) and denote it by $c$. Denote the Verma module of highest weight $(h_L,h_W)$ and central charge $c$ by $V(c, h_L,h_W)$, and its simple quotient by $L(c, h_L,h_W)$.

We shall parameterize the weights in the following useful way.
For $p,r\in\C$, let $M[p,r]$ denote a highest weight module of highest weight
\begin{align}
&\mathbf{h}[p,r]=(\hpr,h_W[p,r]),\label{param1}
\end{align}
where
\begin{align}
&\hpr=(1-p^2)\frac{c-26}{24}+1-\frac{r+1}{2}p,\label{param2}\\
&h_W[p,r]=\frac{1-p^2}{24}c.\label{param3}
\end{align}
In particular, let $V[p,r]$ denote the Verma module, $\vpr$ its highest weight vector, and $L[p,r]$ the unique irreducible quotient.
Note that we have
\begin{align}
&\mathbf{h}[p,r]=\mathbf{h}[-p,-r-2]\label{sim-p}\\
&\hpr+p=h[p,r-2].
\end{align}
Although $\mathbf{h}[0,r]=\left(\frac{c-2}{24},\frac{c}{24}\right)$ for all $r\in\C$, and weights $\left(\frac{c-2}{24}+\l,\frac{c}{24}\right)$ for $\l\neq0$ are not covered by (\ref{param1}), this restriction does not affect our treatment. Furthermore, we always assume that $\operatorname{Re}{(p)}\geq0$ due to (\ref{sim-p}).

Obviously, the PBW basis of the Verma module consists of all vectors
\begin{align}\label{PBW}
&W(-n)^{w_n}\cdots W(-1)^{w_1}L(-m)^{\ell_m}\cdots L(-1)^{\ell_1}\vpr\quad n,m,w_i,\ell_j\in\Zp.
\end{align}

The following results have been proved in \cite{Zhang-Dong}, \cite{R} and \cite{JLPZ}.
\begin{theorem}
Given a non-zero central charge $c\in\C^\times$, we have:
\begin{description}
	\item[generic case]The Verma module $V(c,h_L,h_W)$ is irreducible unless $(h_L,h_W)=\mathbf{h}[p,r]$ for some $p\in\N$, $r\in\C$. 
    \item[non-generic case]In the case $p\in\N$ and $r\in \C$, $V[p,r]$ has a singular vector
	\begin{equation}\label{sing}
		u_{p}=U_p\vpr,\qquad U_p\in\C[W(-1),\ldots,W(-p)]
	\end{equation}
	of weight $\hpr+p=h[p,r-2]$, such that $U(\W)u_p\cong V[p,r-2]$. Note that $U_p$ can be normalized at $W(-p)$.
\end{description}
\end{theorem}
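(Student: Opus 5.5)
The plan is to use the Shapovalov (contravariant) form on $V(c,h_L,h_W)$ and a Kac-type determinant formula, and then to construct the singular vector explicitly inside the commutative subalgebra generated by the $W(-n)$.

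\textbf{Step 1: Gram matrix on each weight space.} On the weight space of degree $n$, order the PBW basis (\ref{PBW}) by first listing the number of $W$'s. The pairing $\langle W(-\mu)L(-\nu)v, W(-\mu')L(-\nu')v\rangle$ is computed by moving $W(k)$, $k>0$, to the right. Since $[W,W]=0$ and $[W(k),L(-m)]$ lands in $W$'s, a $W(k)$ can only be absorbed by an $L(-m)$. Hence pairing a monomial with $a$ letters $W$ against one with $b$ letters $L$ vanishes unless the letter counts match in a suitable triangular way. With the right ordering, the Gram matrix becomes block anti-triangular. Its anti-diagonal blocks pair $W(-\lambda)$ with $L(-\lambda)$ for the same partition $\lambda$, and the leading term is $\prod_i \langle W(-\lambda_i)L(-\lambda_i)\rangle$ up to lower terms, where $\langle W(-k)L(-k)\rangle$ comes from $[W(k),L(-k)]=2kW(0)+\frac{k^3-k}{12}c$. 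The determinant is therefore, up to a nonzero constant,
\[
\det\nolimits_n=\prod_{k=1}^{n}\Bigl(2kh_W+\tfrac{k^3-k}{12}c\Bigr)^{2P_2(n-k)}
\]
with $P_2$ the bipartition count. Checking that the lower-order terms really do not contribute is the main technical obstacle. I would handle it by a filtration argument: grade by the number of $W$'s minus the number of $L$'s and note that $W(0)$ acts via $h_W$ only on the leading part.

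\textbf{Step 2: Reading off reducibility.} A factor vanishes iff $h_W=\frac{(1-k^2)c}{24}$, i.e.\ $h_W=h_W[p,r]$ with $p=k\in\N$, and $r$ is then free since $h_L$ does not enter. Otherwise the form is nondegenerate and $V$ is irreducible, which proves the generic case. This is where the parametrization (\ref{param3}) comes from.

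\textbf{Step 3: The singular vector.} Suppose $p\in\N$. Work in the subspace $\C[W(-1),\dots,W(-p)]v_{p,r}$ of degree $p$. The operators $W(k)$ for $k>0$ kill it. Because $L(k)$ for $k\ge1$ is generated by $L(1),L(2)$, it suffices to impose $L(1)U_pv=L(2)U_pv=0$. The action of $L(k)$ on a $W$-monomial produces only $W$-monomials times scalars involving $h_W$ and $c$; $h_L$ does not appear because no $L$'s are present. So this is a linear system in the coefficients of $U_p$ whose solvability is independent of $r$. Normalize the coefficient of $W(-p)$ to $1$ and solve recursively, using that $L(1)$ lowers degree and determines coefficients from longer partitions. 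The single obstruction is the coefficient of $W(-p)$ itself, which equals exactly the vanishing factor $2ph_W+\frac{p^3-p}{12}c=0$. Alternatively, one can obtain this from the zero of $\det_p$: a radical vector exists, and the triangular structure forces its leading part to lie in pure-$W$ monomials.

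\textbf{Step 4: The submodule.} The vector $u_p$ is singular of weight $(h_L+p,h_W)=\mathbf h[p,r-2]$, using $h[p,r]+p=h[p,r-2]$ and that $h_W$ is unchanged because $W(0)$ commutes with the $W(-k)$. So there is a map $V[p,r-2]\to U(\W)u_p$. It is injective because $U(\W^-)$ acts freely on $V[p,r]$ and $U(\W^-)$ has no zero divisors (PBW, associated graded is a polynomial algebra), so $xU_p v\neq 0$ for $x\neq0$.
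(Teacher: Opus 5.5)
The paper gives no proof of this theorem; it quotes it from Zhang--Dong (the Shapovalov determinant) and from Radobolja and Jiang--Liu--Pei--Zhao (singular vectors). Your route is the standard one: the contravariant form, a singular vector inside $\C[W(-1),\dots,W(-p)]\vpr$, and freeness of $U(\W^-)$. Step 4 is correct. Two things need repair, and the second is a genuine gap.

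\textbf{The determinant formula.} Your determinant is wrong as stated. Put $f_k=2kh_W+\frac{k^3-k}{12}c$ and take degree $2$, with basis $W(-2)v$, $W(-1)^2v$, $W(-1)L(-1)v$, $L(-2)v$, $L(-1)^2v$. The Gram matrix is block anti-triangular with anti-diagonal entries $f_2$, $2f_1^2$, $f_1^2$, so $\det_2=\pm4f_1^6f_2^2$, not $f_1^4f_2^2$. You ignored multiplicities of parts; the exponent of $f_k$ is $2\sum_{s\ge1}P_2(n-ks)$. The zero locus is unchanged, so Step 2 survives.

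However, the triangularity you call the ``main obstacle'' is the whole content, and a simple count settles it. While straightening $\langle v,L_{\nu}W_{\mu}W_{-\mu'}L_{-\nu'}v\rangle$, the number of $L$-letters minus the number of $W$-letters of nonzero mode never increases. It is preserved only by the contractions $[L(a),W(-a)]$ and $[W(a),L(-a)]$, each of which yields $f_a$. Hence the entry vanishes unless $\ell(\nu)+\ell(\nu')\ge\ell(\mu)+\ell(\mu')$. In the case of equality it is nonzero only for $(\mu',\nu')=(\nu,\mu)$, where it equals $\prod_k m_k(\mu)!\,m_k(\nu)!\,f_k^{m_k(\mu)+m_k(\nu)}$. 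Ordering the basis by $\ell(\mu)-\ell(\nu)$ then gives a block anti-triangular matrix with monomial anti-diagonal blocks.

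\textbf{Existence of $u_p$ (the genuine gap).} Step 3 does not prove that $u_p$ exists. On the $P(p)$-dimensional space of degree-$p$ pure-$W$ vectors ($P$ the partition function), the conditions $L(1)u=L(2)u=0$ give $P(p-1)+P(p-2)\ge P(p)$ linear equations. So ``solve recursively; the only obstruction is the $W(-p)$-coefficient'' is an assertion, not an argument.

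Your alternative also fails. A degree-$p$ radical vector need not be pure-$W$: for $r=1$ the degree-$p$ radical contains the subsingular vector $s_{p,1}=(L(-p)+\cdots)\vpr$. Moreover, information about a leading part would not give $U_p\in\C[W(-1),\dots,W(-p)]$ anyway.

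What works is the following.
\begin{enumerate}
\item Positive-mode $W_\mu$ with $\mu\neq\emptyset$ commutes with $W_{-\lambda}$ and kills $\vpr$. So pure-$W$ vectors are orthogonal to every monomial containing a $W$, and $u=\sum_{\lambda\vdash p}\alpha_\lambda W_{-\lambda}\vpr$ lies in the radical iff $B\alpha=0$, where $B_{\nu\lambda}=\langle L_{-\nu}\vpr,W_{-\lambda}\vpr\rangle$.
\item One checks that $B_{\nu\lambda}\neq0$ only if $\nu$ refines $\lambda$, and that $B_{\lambda\lambda}=\prod_k m_k(\lambda)!\,f_k^{m_k(\lambda)}$.
\item At $h_W=h_W[p,r]$ one has $f_k=\frac{kc}{12}(k^2-p^2)$, which vanishes only for $k=p$. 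So row $(p)$ of $B$ is $(f_p,0,\dots,0)=0$, while the complementary minor is invertible. Hence $\ker B$ is one-dimensional with free $W(-p)$-coefficient.
\item Since $f_k\neq0$ for all $k<p$, the form is nondegenerate in degrees below $p$. This forces $L(k)u=0$ for $k\ge1$, and together with $W(k)u=0$ it makes $u$ singular.
\end{enumerate}
This is your triangular recursion made rigorous, with pairings against $L_{-\nu}\vpr$ in place of $L(1)$ and $L(2)$.
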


\begin{theorem}\label{W-struc}
    The structure of the Verma modules in the non-generic case divides into the following two cases:
\begin{description}
	\item[typical case]If $p\in\N$ and $r\notin\N$, then the maximal submodule of $V[p,r]$ is generated by the singular vector $u_p$.
We have the following exact sequence
\[0\longrightarrow V[p,r-2]\longrightarrow V[p,r]\longrightarrow L[p,r]\longrightarrow0.\]
    
\item[atypical case]Assume that $p,r\in\N$. Then the quotient module $V[p,r]/U(\W)u_p$ contains a singular vector $s_{p,r}=S_{p,r}\vpr$ of weight $\hpr+rp=h[p,-r]$, called a subsingular vector in $V[p,r]$, that is of the form
\begin{equation}\label{subsing}
S_{p,r}=L(-p)^r+\sum_{i = 1}^{r}c_ig_iL(-p)^{r-i}
\end{equation}
where $c_i \in \C$ depends on the central charge $c$, $g_i \in U(\W^-)$ that contains at least one factor $W(-k)$ for $k \in \Z_{>0}$, but does not involve $W(-p), L(-p)$. 
    
Moreover, the maximal submodule of $V[p,r]$ is
    \begin{equation}\label{max-sub-gen}
    	\overline V[p,r]=U(\W)s_{p,r}=U\left(\W^{\le0}\right)\spn_\C\{u_{p},s_{p,r}\}
    \end{equation}
    so we have the following exact sequences
\begin{eqnarray}
    &0\longrightarrow\overline V[p,r]\longrightarrow V[p,r]\longrightarrow L[p,r]\longrightarrow0&\nn
    &0\longrightarrow V[p,r-2]\longrightarrow\overline V[p,r]\longrightarrow L[p,-r]\longrightarrow0&\label{max-sub}.
\end{eqnarray}
 In particular, the singular vector $u_p \in U(\W)s_{p,r}$, and the quotient module of $V[p,r]$ by $V[p,r-2]$ is a non-split extension of two irreducible modules:
\[
0\longrightarrow L[p,-r]\longrightarrow V[p,r]/V[p,r-2]\longrightarrow L[p,r]\longrightarrow 0.
\]   
\end{description}
The PBW basis of the irreducible module $L[p,r]$ consists of all vectors (\ref{PBW}) such that
\begin{description}
	\item[typical case]$w_p=0$ if $p\in\N$ and $r\notin\N$;
	\item[atypical case]$w_p=0$ and $\ell_p<r$ if $p,r\in\N$.
\end{description}
\end{theorem}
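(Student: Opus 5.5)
Theorem~\ref{W-struc} is largely recalled from \cite{Zhang-Dong,R,JLPZ}; the new ingredient is the shape (\ref{subsing}) of the subsingular vector. I would organize the argument around a pairing (Shapovalov-type) determinant argument for the quotient $V[p,r]/U(\W)u_p$, together with a grading by the number of $W$-factors.

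\textbf{Step 1: the typical case and the submodule generated by $u_p$.} Since $u_p=U_p\vpr$ with $U_p$ normalized at $W(-p)$, and the $W(n)$ commute, $U(\W)u_p=U(\W^{\le0})u_p$ is free over $U(\W^-)$ and isomorphic to $V[p,r-2]$. Using $W(-p)\equiv$ (lower $W$'s) modulo this submodule, the quotient $V[p,r]/U(\W)u_p$ has PBW basis (\ref{PBW}) with $w_p=0$.

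Next I would compute the Gram-type determinant on this quotient, pairing $W$-monomials against $L$-monomials. Modulo $W$-degree filtration it is upper triangular, because $W(n)$ acting on $L$-monomials lowers the $L$-count and produces a scalar $h_W$-term. The only degenerate factor beyond $u_p$ comes from the pairing of $L(-p)^k$ with $W(p)^k$. That pairing is proportional to $\prod_{j=1}^k (2p\,h_W+\tfrac{p^3-p}{12}c \;-\;\text{shift}_j)$, which in the parametrization (\ref{param2})--(\ref{param3}) is proportional to $\prod_j (r-j)$ up to nonzero factors.

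For $r\notin\N$ this is nonzero at every level, so the quotient is irreducible. This gives the typical case.

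\textbf{Step 2: the atypical case, existence and shape of $s_{p,r}$.} For $r\in\N$ the factor vanishes first at $k=r$, so there is a vector in the quotient at weight $h[p,r]+rp$ annihilated by $\W^+$. I would construct it by solving triangularly. Start from $L(-p)^r\vpr$ and correct it by terms of lower $L(-p)$-degree, inductively choosing $g_i$ to kill $W(n)$, $L(n)$ for $n>0$.

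To see that the corrections can avoid $W(-p)$ and $L(-p)$ and must contain a $W$ factor, I would argue as follows:
\begin{itemize}
\item $W(-p)$ is already removed in the quotient.
\item An $L$-only correction would be detected by the nondegenerate pairing with $W$-monomials of lower $L(-p)$-degree, so it is forced to be zero.
\end{itemize}
This yields (\ref{subsing}). Note $h[p,r]+rp=h[p,-r]$ by the shift identity.

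\textbf{Step 3: maximal submodule and composition structure.} A character comparison gives the rest. The submodule generated by $s_{p,r}$ in the quotient is a highest-weight module of weight $\mathbf h[p,-r]$. Here $-r\notin\N$, so by Step 1 it is either irreducible or a quotient of the typical Verma module by its $u_p$-submodule. The determinant shows its character equals that of $L[p,-r]$.

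Then:
\begin{itemize}
\item The radical of the form on $V[p,r]/V[p,r-2]$ at each level has dimension equal to the number of basis monomials with $\ell_p\ge r$. This gives the PBW description of $L[p,r]$ and the exact sequences (\ref{max-sub}).
\item Non-splitness holds because the quotient is generated by $\vpr$.
\item $u_p\in U(\W)s_{p,r}$ follows by applying $W(p)$-type operators to $s_{p,r}$ and using $L(p)^rL(-p)^r\vpr\neq 0$ modulo $u_p$ together with the bracket structure. This is the step I am least sure of; I would first try computing $W(p)\cdots$ acting on $s_{p,r}$ directly in the Verma module.
\end{itemize}

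\textbf{Main obstacle.} The main difficulty is controlling the determinant, that is, showing the only degeneracies are the $(r-j)$ factors, and pinning down the coefficient structure in Step~2.
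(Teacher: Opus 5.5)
\textbf{The determinant in Step 1 is wrong.} The paper gives no proof of Theorem~\ref{W-struc}: all of it, including the shape (\ref{subsing}), is quoted from \cite{Zhang-Dong,R,JLPZ}. Your proposal is therefore an independent attempt, and its load-bearing Step~1 fails. With $v=v_{p,r}$, the pairing you single out is
\[
\langle W(-p)^k v,\,L(-p)^k v\rangle=\langle v,\,W(p)^kL(-p)^kv\rangle=k!\,\Bigl(2p\,h_W+\tfrac{p^3-p}{12}c\Bigr)^k .
\]
There are no level shifts here. Since $[W(p),L(-p)]=2pW(0)+\frac{p^3-p}{12}c$ and $[W(0),L(-p)]=pW(-p)$, the operator $W(0)$, unlike $L(0)$, never produces a level-dependent scalar. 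By (\ref{param3}), $h_W[p,r]$ depends only on $p$, and $2p\,h_W[p,r]+\frac{p^3-p}{12}c=0$. So this pairing vanishes identically in the non-generic case, for every $r\in\C$; it is the factor responsible for $u_p$, and it cannot be proportional to $\prod_j(r-j)$. (Moreover, $W(-p)$ has already been eliminated from your basis of the quotient.)

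The parameter $r$ enters only through $h_L=h[p,r]$, i.e.\ via $L(0)$. So in $V[p,r]/U(\W)u_p$ the $L(-p)$-monomials must be paired against annihilators involving $L(p)$, after $W(-p)$ has been rewritten through $u_p$. The pairings of $L(-p)$- against $W(-p)$-monomials, which supplied the nonzero diagonal for the Verma module, are exactly the ones that die. Hence there is no triangular structure with the diagonal you describe.

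Concretely, take $p=2$. Then $\langle L(-2)v,L(-2)v\rangle=4h[2,r]+\frac c2=13-4r$, which is $9\neq0$ at $r=1$, although $s_{2,1}$ sits at level $2$. The vanishing appears only for $s=\bigl(L(-2)+\frac{12}{c}W(-1)L(-1)+b\,W(-1)^2\bigr)v$. There the coefficient $\frac{12}{c}$ is forced by $W(1)s=0$ and contributes $-9$, giving $L(2)s=4(1-r)v$. For $p=1$, $\langle L(-1)^kv,L(-1)^kv\rangle=k!\prod_{j=1}^k(j-r)$ happens to give the right factors, but this is an $L(1)^k$-pairing and does not generalize.

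Several of your conclusions are read off from this determinant: irreducibility in the typical case, the level $rp$ of $s_{p,r}$, the absence of pure-$L$ corrections in Step~2, the character of the image of $U(\W)s_{p,r}$, and the PBW basis of $L[p,r]$. None of them is established. Controlling the Gram matrix of the quotient is precisely the nontrivial content of \cite{R,JLPZ}.

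\textbf{On the step you flagged ($u_p\in U(\W)s_{p,r}$).} Iterating $W(p)$ will not work. At $h_W=h_W[p,r]$ one gets $W(p)L(-p)^mW(-p)^jv=2p^2\binom m2L(-p)^{m-2}W(-p)^{j+1}v$, hence $W(p)^{r-1}L(-p)^rv=0$ for $r\ge3$. Use the single mode $W((r-1)p)$ instead. By induction, $W(np)L(-p)^mv=(n+1)!\,p^{n+1}\binom{m}{n+1}W(-p)L(-p)^{m-n-1}v$ for $n\ge1$, so $W((r-1)p)L(-p)^rv=r!\,p^r\,W(-p)v$. The terms $c_ig_iL(-p)^{r-i}$, with the $W$'s of $g_i$ written on the left, keep a factor $W(-k)$ with $k\neq p$, so they cannot contribute to $W(-p)v$. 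Since $W((r-1)p)s_{p,r}$ lies in $U(\W)u_p$ at weight $h[p,r]+p$, i.e.\ in $\C u_p$, this gives $W((r-1)p)s_{p,r}=r!\,p^r\,u_p$ for $r\ge2$. For $r=1$ the two vectors have the same weight, so you must use the non-semisimple $W(0)$; note that $(W(0)-h_W)L(-p)v=pW(-p)v$.
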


\begin{remark}\label{cof}
Although the exact formula for the subsingular vector (\ref{subsing}) is unclear, it is known that in PBW basis each term of $s_{p,r}-L(-p)^r\vpr$ contains at least one factor $W(-k)$ for $k \in \Z_{>0}$. From here, and the commutator relations in $\W$ we can see that, no matter the choice of basis, $L(-1)^{rp}\vpr$ cannot occur in $s_{p,r}$ if $p>1$. This will be crucial in the classification of the $C_1$-cofinite highest weight modules.
\end{remark}

We illustrate the structure of the atypical Verma module $V[p,r]$ by a diagram in Figure \ref{fig}. Filled circles denote singular, and empty ones subsingular vectors. Arrows point to the submodules. The top vector has conformal weight $h[p,r]$, and each level below it increases weight by $p$. Thus, conformal weights of $u_{p,r}^{(k)}$ and $s_{p,r}^{(k)}$ are $h[p,r]+kp=h[p,r-2k]$.

There is an infinite list of singular vectors $u_{p,r}^{(i)}=U_p^i\vpr$, $i\in\N$, each generating a submodule isomorphic to $V[p,r-2i]$ and a finite list of subsingular vectors $s_{p,r}^{(k)}=S_{p,r-2k+2}v_{p,r-2k+2}\in V[p,r-2k+2]$, $k=1,\ldots,\lceil\tfrac{r+1}{2}\rceil$.

{\small
\begin{figure}[h]
\adjustbox{scale=0.8}{\begin{tikzcd}
  	&&&\phantom{\vpr\quad}\bcr\quad\vpr\arrow[to=8-1]\\
	&&&\phantom{u^{(1)}_{p,r}\quad}\bcr\quad u^{(1)}_{p,r}\arrow[to=7-2]\\
	&&&\phantom{u^{(2)}_{p,r}\quad}\bcr\quad u^{(2)}_{p,r}\arrow[to=6-3]\\
	&&&\phantom{u^{(3)}_{p,r}\quad}\bcr\quad u^{(3)}_{p,r}\\
	&&&\vdots\arrow[d]\\
	&\qquad\qquad\qquad&\phantom{s^{(3)}_{p,r}}\quad\ec\quad s^{(3)}_{p,r}\arrow[to=4-4]&\phantom{u^{(r-2)}_{p,r}\quad}\bcr\arrow[d]\quad u^{(r-2)}_{p,r}\\
	\qquad&\phantom{s^{(2)}_{p,r}}\quad\ec\quad s^{(2)}_{p,r}\arrow[to=3-4]&&\phantom{u^{(r-1)}_{p,r}\quad}\bcr\arrow[d]\quad u^{(r-1)}_{p,r}\\
	\phantom{s^{(1)}_{p,r}}\quad\ec\quad s^{(1)}_{p,r}\arrow[to=2-4]&&&\phantom{u^{(r)}_{p,r}\quad}\bcr\arrow[d]\arrow[d]\quad u^{(r)}_{p,r}\\
	&&&\vdots
  \end{tikzcd}}
\caption{Atypical Verma module $V[p,r]$, $p,r\in\Zp$}\label{fig}
\end{figure}}

\section{$C_1$-cofinite modules}

For each $c\in\C^\times$, the vacuum module $L[1,1]=L(c,0,0)$ carries the structure of a simple vertex operator algebra, strongly generated by the fields
\[\o(z)=\sum_{n\in\Z}L(n)z^{-n-2},\quad W(z)=\sum_{n\in\Z}W(n)z^{-n-2}.\]
We denote this vertex operator algebra by $\LW_c$.

\begin{definition}
A module $M$ over a vertex operator algebra $V$ is $C_1$-cofinite if $$C_1(M)=\spn_\C\big\{v_{-1}x\mid x\in M, v\in \coprod_{n \in \Z_{\geq 1}}V_{(n)}\big\}$$ has finite codimension in $M$.
\end{definition}

Denote by $\I$ the two-sided ideal in the universal enveloping algebra $U(\W^-)$, generated by $\{L(-n),W(-n):n\geq2\}$. Since $[W(-1),L(-1)]=0$, we have
\ben
	\item $U(\W^-)/\I\cong\C[x,y]$ under the map $[L(-1)] \mapsto x$ and $[W(-1)] \mapsto y$,\label{I2-quot}
	\item $C_1(M) = \I M$ for any $\LW_c$-module $M$.
\een

\begin{proposition} For each $r \in {\Z}_{>0}$, there is a subsingular vector $s_{1,r}\in V[1,r]$ such that
\[s_{1,r} \equiv L(-1) ^r v_{1,r} \quad \big({\rm mod}\; \; C_1 (V[1,r])\big).\]
Moreover, the simple quotient $L[1,r]$ is $C_1$-cofinite.
\end{proposition}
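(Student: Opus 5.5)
Since $p=1$, Theorem~\ref{W-struc} (atypical case) gives a subsingular vector of the form
\[
S_{1,r}=L(-1)^r+\sum_{i=1}^{r}c_ig_iL(-1)^{r-i},
\]
where each $g_i\in U(\W^-)$ contains at least one factor $W(-k)$ and involves neither $W(-1)$ nor $L(-1)$. The plan is to show that every correction term $g_iL(-1)^{r-i}\vpr$ lies in $\I V[1,r]=C_1(V[1,r])$. Weight considerations and the restriction on $g_i$ make this almost immediate.

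\textbf{Step 1: the congruence.} Each $g_i$ is a polynomial in $L(-k),W(-k)$ with $k\ge 2$, since $W(-1)$ and $L(-1)$ are excluded. Consider a monomial of $g_i$ that contains at least one factor. Its leftmost factor then lies in $\{L(-n),W(-n):n\ge2\}$, so the monomial applied to $L(-1)^{r-i}\vpr$ lies in $\I V[1,r]$. A constant monomial is impossible: $g_i$ must contain a $W(-k)$, and in any case $g_i$ has positive degree $i$ for weight reasons. Hence $s_{1,r}-L(-1)^r v_{1,r}\in \I V[1,r]=C_1(V[1,r])$, using item (2) before the proposition. The point needing care is whether the ordering of factors in $g_i$ matters. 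It does not, because $\I$ is a two-sided ideal: any product containing a generator of $\I$ lies in $\I$, whatever the ordering. If the paper's statement of Theorem~\ref{W-struc} is read loosely (for instance, $g_i$ might contain $W(-1)$), I would fall back on the PBW description. Modulo $\I$, $U(\W^-)/\I\cong\C[x,y]$, so every term is congruent to a multiple of $W(-1)^aL(-1)^b\vpr$ with $a+b=r$. The $W$-containing terms have $a\ge1$, and I would need to argue separately that these vanish or are absorbed. This fallback is where I expect the main obstacle, if the form given in the theorem is not used directly.

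\textbf{Step 2: $C_1$-cofiniteness of $L[1,r]$.} By Theorem~\ref{W-struc} the maximal submodule is generated by $u_1$ and $s_{1,r}$. The singular vector $u_1=U_1v_{1,r}$ with $U_1\in\C[W(-1)]$ is normalized at $W(-1)$, so $u_1=W(-1)v_{1,r}$ by weight. Therefore, in $L[1,r]=V[1,r]/\overline V[1,r]$ both $W(-1)\bar v$ and $L(-1)^r\bar v$ lie in $C_1(L[1,r])$ modulo the image of $C_1$, where $\bar v$ is the image of $v_{1,r}$.

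Now $L[1,r]=U(\W^-)\bar v$, so $L[1,r]/C_1(L[1,r])$ is spanned by the images of $x^ay^b\bar v$ under the quotient map $U(\W^-)/\I\cong\C[x,y]$. This uses that $C_1(M)=\I M$ and that $\I$ is a two-sided ideal, so $U(\W^-)\bar v/\I\bar v$ is a $\C[x,y]$-module quotient. The relations $y\bar v\equiv0$ and $x^r\bar v\equiv0$ hold because $W(-1)\bar v=0$ in $L[1,r]$ and $L(-1)^r\bar v=s_{1,r}-(\text{element of }\I V)$, which maps into $C_1$. Since $[L(-1),W(-1)]=0$, these relations show that the quotient is spanned by $\bar v,x\bar v,\dots,x^{r-1}\bar v$. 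Hence $\dim L[1,r]/C_1(L[1,r])\le r$, and $L[1,r]$ is $C_1$-cofinite.
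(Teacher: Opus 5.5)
Your proof is correct and follows the paper's argument. The correction terms in $s_{1,r}$ given by Theorem~\ref{W-struc} involve only $L(-k),W(-k)$ with $k\ge 2$, so they lie in $\I V[1,r]=C_1(V[1,r])$. Then $W(-1)\bar v=0$ together with $L(-1)^r\bar v\in C_1(L[1,r])$ gives $\dim L[1,r]/C_1(L[1,r])\le r$ via $U(\W^-)/\I\cong\C[x,y]$; this last step is exactly what the paper leaves as ``easily concluded''. Your fallback is unnecessary: the paper also uses the stated form of the subsingular vector directly, and in any case $s_{1,r}$ is only determined modulo $U(\W^-)W(-1)v_{1,r}$, so ordering PBW monomials with $W(-1)$ rightmost always yields a representative free of $W(-1)$.
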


\begin{proof}
By Theorem \ref{W-struc} (cf.\ \cite{R} and \cite{JLPZ} for more details) there is a subsingular vector (\ref{subsing}) of the type
\begin{equation}\label{subsing p=1}
s_{1,r}=\left(L(-1)^r+\sum_{k=0}^{r-1} w_{r-k}L(-1)^k\right)v_{1,r}
\end{equation}
where $w_{r-k}\in U(\W^-)$ does not involve $W(-1)$ and $L(-1)$. Therefore, we have 
\[s_{1,r} - L(-1) ^r v_{1,r}  \in   C_1(V[1,r]).\]
In the simple quotient $L[1,r]$, we have $W(-1) v_{1,r}=0$ and $s_{1,r} =0$ which implies that  $L(-1) ^r v_{1,r} \in 
 C_1(L[1,r])$. From this we easily conclude that $L[1,r]$ is $C_1$-cofinite.
\end{proof}

We will show that $L[1,r]=L\left(\tfrac{1-r}{2},0\right)$, $r\in\N$ are the only $C_1$-cofinite modules among the highest weight $\LW_c$-modules.

\begin{definition}
Let $\cC$ denote the category of grading-restricted generalized $\LW_c$-modules that are $C_1$-cofinite. For $r\in\N$, define \[L[r]:=L[1, r].\]
\end{definition}

\begin{proposition}\label{prop:classification}
Every highest weight module in $\cC$ is irreducible and isomorphic to $L[r]$ for some $r\ge 1$.
\end{proposition}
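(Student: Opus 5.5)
Let $M$ be a highest weight $\LW_c$-module in $\cC$ with highest weight vector $v$ of weight $(h_L,h_W)$. Then $M$ is a quotient of the Verma module $V(c,h_L,h_W)$, and by (\ref{I2-quot}) we have
\[
M/C_1(M)=M/\I M\cong \C[x,y]/J,
\]
where $J$ is the ideal of polynomials $f$ with $f(L(-1),W(-1))v\in \I M$. The image of any element of the maximal submodule of $M$ lands in $J$ modulo $\I$. So $C_1$-cofiniteness says that the image of the kernel $K$ of $V(c,h_L,h_W)\to M$ in $V/\I V\cong\C[x,y]$ has finite codimension. In particular, it must contain a polynomial in $x$ alone and a polynomial in $y$ alone. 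The plan is to use this observation case by case, following Theorem \ref{W-struc}.

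\emph{Generic case.} The Verma module is irreducible, so $K=0$ and $M/C_1(M)\cong\C[x,y]$ is infinite dimensional.

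\emph{Typical case ($p\in\N$, $r\notin\N$).} Here $K\subseteq U(\W)u_p$ with $u_p=U_pv$ and $U_p\in\C[W(-1),\dots,W(-p)]$. Modulo $\I$, every element of $U(\W)u_p$ lies in $\C[x,y]\cdot \bar U_p$, where $\bar U_p\in\C[y]$ is the image of $U_p$. Indeed, $U(\W)u_p=U(\W^-)u_p$ and $u_p$ is singular. The quotient $\C[x,y]/(\bar U_p)$ is infinite dimensional, whether $\bar U_p$ is zero or a polynomial in $y$. Hence $M$ is not $C_1$-cofinite.

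\emph{Atypical case ($p,r\in\N$).} By (\ref{max-sub-gen}), every proper quotient of $V[p,r]$ has $K\subseteq U(\W^{\le0})\spn\{u_p,s_{p,r}\}$, so the image of $K$ lies in the ideal generated by $\bar U_p\in\C[y]$ and $\bar S_{p,r}$.

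For $p>1$, I will argue that $\bar S_{p,r}$ has no pure $x$-power term. By Remark \ref{cof}, $L(-1)^{rp}v$ does not occur in $s_{p,r}$. In PBW order (with $L(-1)$ to the right and all $W$'s to the left), any monomial containing an $L(-n)$ or $W(-n)$ with $n\ge2$ maps to $0$ modulo $\I$. Monomials containing $W(-1)$ map to multiples of $y$. Hence $\bar S_{p,r}\in y\C[x,y]$, and the image of $K$ lies in $(y)$, so $\C[x,y]/(y)\cong\C[x]$ forces infinite codimension. One subtle point needs checking: since $\I$ is a two-sided ideal and $[L(-1),W(-1)]=0$, passing to the quotient is compatible with the PBW ordering. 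For the same reason, $U(\W^{\le 0})$ acting on $u_p$ and $s_{p,r}$ only multiplies their images by elements of $\C[x,y]$.

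For $p=1$, $r\in\N$, the highest weight is $\mathbf h[1,r]=(\tfrac{1-r}{2},0)$, and we must show that $M$ is the irreducible $L[r]$. The proper quotients of $V[1,r]$ other than $L[1,r]$ have $K\subseteq U(\W)u_1$ or, more generally, $K$ not containing $s_{1,r}$. By (\ref{max-sub}), any submodule properly smaller than $\overline V[p,r]$ is contained in $V[1,r-2]=U(\W)u_1$. In that case the image of $K$ lies in $(\bar U_1)\subseteq(y)$, since $U_1=W(-1)$ up to scalar, and again the codimension is infinite. So $K=\overline V[1,r]$ and $M\cong L[r]$. The same argument in the cases $p>1$ and the typical case also shows that no quotient there, in particular not the irreducible one, is $C_1$-cofinite.

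The main obstacle is the case $p>1$: turning Remark \ref{cof} into the precise statement that the image of the subsingular vector in $\C[x,y]$ is divisible by $y$. Remark \ref{cof} was stated exactly for this purpose, so I will rely on it, carefully verifying the compatibility of the PBW reduction with the quotient by $\I$.
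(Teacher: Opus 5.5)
Your reduction $M/C_1(M)\cong\C[x,y]/J$ is a correct, more explicit version of the paper's argument, and so are the generic, typical and $p>1$ atypical cases. One small inaccuracy there: $W(0)$ acts on images in $\C[x,y]$ by $h_W+y\partial_x$, not by multiplication. It still preserves $(y)$, so nothing breaks.

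\textbf{The gap: the case $p=1$.} Your claim that every submodule properly contained in $\overline V[1,r]$ lies in $V[1,r-2]$ does not follow from (\ref{max-sub}). That sequence only says that $\overline V[1,r]/V[1,r-2]\cong L[1,-r]$ is simple. A submodule $K$ with $K+V[1,r-2]=\overline V[1,r]$ is only guaranteed to contain some lift $s_{1,r}+w$ with $w\in V[1,r-2]$. You would need every such lift to generate $u_1=W(-1)v$. This holds for $r=1$ (apply $W(0)$) and for $r=2$ (apply $W(1)$, which kills every admissible $w$). It fails for $r=3$. The paper's one-line appeal to Theorem~\ref{W-struc} and Remark~\ref{cof} passes over exactly the same point.

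\textbf{Counterexample at $r=3$.} Here $h=-1$ and $h_W=0$, and one finds
\[
s_{1,3}=\Bigl(L(-1)^3+\frac{24}{c}W(-2)L(-1)+\frac{12}{c}W(-3)\Bigr)v .
\]
Direct computation gives $L(1)s_{1,3}=\frac{72}{c}L(-1)u_1$ and $W(1)s_{1,3}=6L(-1)u_1$. It also gives $L(2)s_{1,3}=\frac{156}{c}u_1$ and $W(2)s_{1,3}=6u_1$. Since $u_1$ has weight $(0,0)$, the vectors $L(-1)u_1$ and $W(-1)u_1$ are killed by $L(1)$ and $W(1)$. On the other hand, $L(2)L(-2)u_1=W(2)L(-2)u_1=L(2)W(-2)u_1=\frac c2u_1$ and $W(2)W(-2)u_1=0$. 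Hence
\[
s'=s_{1,3}-\frac{12}{c}L(-2)u_1+\Bigl(\frac{12}{c}-\frac{312}{c^2}\Bigr)W(-2)u_1
\]
is annihilated by $L(2)$, $W(2)$, $L(1)^2$, $L(1)W(1)$ and $W(1)^2$. Therefore $U(\W)s'$ has zero level-one component and does not contain $u_1$.

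Now take $K=U(\W)s'+U(\W)L(-1)u_1+U(\W)W(-1)u_1$. Then $K$ has nothing at level $1$, and its image in $\C[x,y]$ contains $x^3$, $xy$ and $y^2$. So $V[1,3]/K$ is a $C_1$-cofinite highest weight module with composition factors $L[3]$ and $L[1]$. It is an $\LW_c$-module, because by Theorem~\ref{W-struc} with $p=r=1$ the universal $W(2,2)$ vertex algebra is already simple for $c\neq0$.

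\textbf{Consequence.} This step cannot be repaired, because the irreducibility assertion of the proposition fails as stated. What your argument, and the paper's, actually establishes is weaker. Every highest weight module in $\cC$ has highest weight $\mathbf h[1,r]$ with $r\in\N$. Consequently, the irreducible highest weight modules in $\cC$ are exactly the $L[r]$.
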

\begin{proof}
We have already shown that modules $L[r]$ are $C_1$--cofinite. Let $M[p,r]$ be an arbitrary highest weight $\W$-module. By definition, $C_1(M[p,r])$ contains
\[
\spn_\C\{L(-n-1)x,W(-n-1)x\mid n\in\N, x\in M[p,r]\}
\]
Hence for $C_1$-cofiniteness of $\W$-modules we need to show that
\[
\spn_\C\{W(-1)^wL(-1)^\ell\vpr+C_1(M[p,r])\mid w,\ell\in\Zp\}
\]
is finite-dimensional.

But from the structure of highest weight $\W$-modules in Theorem \ref{W-struc} (see also Remark \ref{cof}), it follows that $\left\lbrace L(-1)^\ell\vpr+C_1(M[p,r]):\ell\in\Zp\right\rbrace$ is finite-dimensional if and only if $p=1$ and $M[p,r]=L[r]$ for some $r\in\N$.
\end{proof}

In the following we show that category $\cC$ is exactly the category of finite length $\LW_c$-modules whose composition factors are $L[r]$.

\begin{lemma} \label{intersection}
Let
\[
  0\longrightarrow N\longrightarrow M\longrightarrow L[r]\longrightarrow 0
\]
be an exact sequence of $\LW_c$-modules, with $r\in\N$. Let $w\in M$ be a lift of the highest-weight vector
$v_{1,r}\in L[r]$. Assume $b\in \I$ is such that $b v_{1,r}=0$. Then $bw\in C_1(N)$.
\end{lemma}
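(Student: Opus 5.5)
The plan is to reduce everything to an explicit description of the annihilator of $v_{1,r}$ in $U(\W^-)$ and then work in $U(\W^-)/\I\cong\C[x,y]$. First, since $L[r]=V[1,r]/\overline V[1,r]$ and $V[1,r]$ is free over $U(\W^-)$, the element $b$ kills $v_{1,r}$ exactly when $b\,v_{1,r}\in\overline V[1,r]$. By (\ref{max-sub-gen}), $\overline V[1,r]=U(\W^{\le0})\spn\{u_1,s_{1,r}\}$. Both $u_1=W(-1)v_{1,r}$ and $s_{1,r}$ are highest weight vectors, so $\W^0$ preserves their span, and hence $\overline V[1,r]=U(\W^-)u_1+U(\W^-)s_{1,r}$. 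This gives
\[
\mathrm{Ann}_{U(\W^-)}(v_{1,r})=U(\W^-)W(-1)+U(\W^-)S_{1,r},\qquad S_{1,r}=L(-1)^r+g,\ g\in\I,
\]
where $g\in\I$ by (\ref{subsing p=1}). So I write $b=a_1W(-1)+a_2S_{1,r}$.

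Next I use the lift $w$. The vectors $W(-1)w$ and $S_{1,r}w$ map to $0$ in $L[r]$, so they lie in $N$. Hence any term of the form $i\,W(-1)w$ or $i\,S_{1,r}w$ with $i\in\I$ lies in $\I N=C_1(N)$.

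To bring $b$ into this form, reduce modulo $\I$. Since $b\in\I$, we get $\bar a_1y+\bar a_2x^r=0$ in $\C[x,y]$, so $\bar a_1=fx^r$ and $\bar a_2=-fy$ for some polynomial $f$. Let $f'$ be a lift of $f$ in the commutative letters $L(-1),W(-1)$, which commute. Then $a_1=f'L(-1)^r+i_1$ and $a_2=-f'W(-1)+i_2$ with $i_1,i_2\in\I$, and
\[
b w=i_1W(-1)w+i_2S_{1,r}w+f'\big(L(-1)^r-S_{1,r}\big)W(-1)w+f'\,[S_{1,r},W(-1)]\,w .
\]
The first two terms lie in $C_1(N)$ by the previous paragraph. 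The third also does, because $L(-1)^r-S_{1,r}=-g\in\I$, $\I$ is two-sided, and $W(-1)w\in N$.

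It remains to handle the commutator term $c:=[S_{1,r},W(-1)]$. Since $\mathrm{ad}\,W(-1)$ is a derivation of $U(\W^-)$ sending $L(-n)$ to a multiple of $W(-n-1)$ and $W(-n)$ to $0$, we have $c\in\I$. Moreover $c\,v_{1,r}=S_{1,r}u_1-W(-1)s_{1,r}\in\overline V[1,r]$, so $c$ is again an element of $\I\cap\mathrm{Ann}(v_{1,r})$, of weight $r+1$. Thus the lemma reduces to proving the case $b=c$ (the factor $f'$ is harmless because $\I N$ is stable under $U(\W^-)$). I would run an induction on the weight of $b$: every term produced that is not already visibly in $\I N$ is of the form (polynomial in $L(-1),W(-1)$)$\cdot c$, so it suffices to treat the single element $c$.

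This last step is the main obstacle. To handle $c$, I would decompose it directly. Writing $S_{1,r}=\sum_j \alpha_j m_j$ in PBW monomials, $c$ is a sum of terms obtained by replacing one $L(-n)$ in $m_j$ with $(1-n)W(-n-1)$. Since $W(-n-1)$ commutes with all $W$'s, I can move such factors to the right, acting on $w$. The goal is then to use the singular-vector relations, $W(-n-1)w\in N$ and more generally $\W^+$-invariance of the lifted relations, or else the fact (Theorem \ref{W-struc}) that $u_1\in U(\W)s_{1,r}$, to express $c$ as $j_1W(-1)+j_2S_{1,r}$ with $j_1,j_2\in\I$. If such an expression holds, the argument of the previous paragraphs applies and closes the proof. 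I expect this identity, which amounts to showing that the syzygy between $W(-1)$ and $S_{1,r}$ lifts modulo $\I\cdot\mathrm{Ann}$, to be where the real work lies.
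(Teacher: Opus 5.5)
Up to the last step your argument is the same as the paper's. The decomposition $b=a_1W(-1)+a_2S_{1,r}$, the projection to $U(\W^-)/\I\cong\C[x,y]$, and the coprimality of $x^r$ and $y$ are exactly the paper's $b=By+B'S_r$, $B_0=x^rh$, $B_0'=-yh$. Your leftover term $f'[S_{1,r},W(-1)]w$ agrees modulo $C_1(N)$ with the paper's leftover $-yhR_rw$.

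A minor point: $s_{1,r}$ is not a highest weight vector of $V[1,r]$; it is only singular modulo $U(\W^-)u_1$. Since $\W^0 s_{1,r}\subseteq \C s_{1,r}+U(\W^-)u_1$, your description of the annihilator of $v_{1,r}$ is still correct.

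The genuine gap is that you never show the leftover term lies in $C_1(N)$, and the induction you sketch cannot do it. Take $b=[S_{1,r},W(-1)]=S_{1,r}W(-1)-W(-1)S_{1,r}$. The natural decomposition has $a_1=S_{1,r}$ and $a_2=-W(-1)$, hence $f=1$, and your procedure returns the same term $[S_{1,r},W(-1)]w$. As stated, the argument is circular. The alternative via $u_1\in U(\W)s_{1,r}$ is not carried out.

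The missing idea is that the commutator vanishes for a suitable choice of $S_{1,r}$. $W(-1)$ commutes with $L(-1)$ and with every $W(-m)$, so it is central in the subalgebra these generate. The element $S_{1,r}=L(-1)^r+R_r$ can be taken inside that subalgebra, as in $S_{1,2}=L(-1)^2+\frac{6}{c}W(-2)$. This is what the paper's final line uses: $yhR_rw=hR_r(yw)\in\I N=C_1(N)$. The paper does not spell out why $R_r$ contains no $L(-n)$ with $n\ge 2$. With this choice $[S_{1,r},W(-1)]=0$, and your own formula gives $bw=i_1W(-1)w+i_2S_{1,r}w-f'gW(-1)w\in\I N$, which completes the proof.

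To justify the choice, work in $V[1,r]/J$ with $J=U(\W^-)W(-1)v_{1,r}$.
\begin{enumerate}
\item This quotient has a PBW basis $L_\nu\beta v_{1,r}$, where $L_\nu$ runs over ordered monomials in the $L(-n)$, $n\ge 2$, and $\beta$ over monomials in $L(-1)$ and the $W(-n)$, $n\ge 2$. Write $s_{1,r}\equiv\sum_\mu L_\mu b_\mu v_{1,r}$ accordingly.
\item Because $h_W[1,r]=0$ and $[W(j),L(-1)]=(j+1)W(j-1)$, every $W(m)$ with $m\ge 0$ maps $b_\mu v_{1,r}$ into $J$.
\item Suppose $D>0$ is the largest $|\mu|$ with $b_\mu v_{1,r}\notin J$, and let $m\ge 2$ be the largest index occurring in such a $\mu$. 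Then the component of $W(m)s_{1,r}$ of degree $D-1$ in the $L(-n)$, $n\ge 2$, is $\frac{m^3-m}{12}c\sum_{|\mu|=D}\mu_m L_{\mu\setminus m}b_\mu v_{1,r}$, where $\mu_m$ is the multiplicity of $m$ in $\mu$.
\item By PBW this component is nonzero modulo $J$. That contradicts $s_{1,r}$ being singular in $V[1,r]/J$, so $D=0$.
\end{enumerate}
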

\begin{proof}
Applying (\ref{I2-quot}) and (\ref{subsing p=1}) we write
\begin{align*}
&x=L(-1),\qquad y=W(-1),\\
&s_{1,r}=S_r v_{1,r},\qquad S_r=x^r+R_r\in U(\W^-),
\end{align*}
where each PBW monomial of $R_r$ contains $W(-n)$ for some $n>1$. Thus $R_r\in\I$.

By Theorem \ref{W-struc} (\ref{max-sub-gen}), our assumption $b v_{1,r}=0$ leads to $bw\in N$ and
\[
	b=By+B'S_r,\qquad\text{for some}\qquad B,B'\in U(\W^-)
\]
Choose representatives of the images of $B$ and $B'$ in
$U(\W^-)/\I\cong\C[x,y]$:
\[
  B=B_0+B_1,\qquad B'=B'_0+B'_1,
\]
with
\[
  B_0,\,B'_0\in\C[x,y],
  \qquad
  B_1,\,B'_1\in\I.
\]
Because $b, R_r \in I_{\geq 2}$, we apply the canonical projection $U\longrightarrow U/\I\cong\C[x,y]$ to
\[
	b=B_0y+B'_0x^r+B'_0R_r+B_1y+B'_1S_r
\]
and get 
\[
B_0y+B'_0x^r=0\qquad\text{in }\C[x,y].
\]
Since $x^r$ and $y$ are relatively prime, there exists
$h\in\C[x,y]$ such that
\[
B_0 = x^r h,\qquad B'_0 = -yh.
\]
When lifting to $M$ we have $yw,\,S_rw \in N$, so 
\[
  B_1 y w,\ B'_1 S_r w\in C_1(N).
\]
Thus, modulo $C_1(N)$, we have
\begin{align*}
	bw &\equiv (B_0 y + B'_0 S_r) w \\
		&\equiv (x^r h y - yh (x^r+R_r))w\\
		&\equiv -y h R_r w.
\end{align*}

Since $y=W(-1)$ commutes with $x$ and with all $W(-m)$, and since $yw\in N$ we have $yR_rw\in C_1(N)$, and since $C_1(N)$ is stable under multiplication by $h$, we also have $yhR_rw\in C_1(N)$. Thus $bw\in C_1(N)$ as claimed.
\end{proof}

\begin{proposition}\label{prop:extension}
Let
\[
  0\longrightarrow N\longrightarrow M\longrightarrow L[r]\longrightarrow 0
\]
be exact, with $r\in\N$. If $M$ is $C_1$-cofinite and $N$ is finitely generated, then $N$ is $C_1$-cofinite.
\end{proposition}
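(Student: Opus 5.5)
The plan is to compare $N/C_1(N)$ with $M/C_1(M)$ and show that the kernel of the natural map between them is finite-dimensional. Lemma \ref{intersection} will control exactly the part of $C_1(M)\cap N$ that could fail to lie in $C_1(N)$. Write $\pi:M\to L[r]$ and fix a lift $w\in M$ of $v_{1,r}$. Since $N\subset M$ and $C_1(N)=\I N\subset \I M=C_1(M)$, the inclusion induces a map
\[
\phi: N/C_1(N)\longrightarrow M/C_1(M).
\]
Its image is finite-dimensional because $M$ is $C_1$-cofinite. It therefore suffices to show that $\ker\phi=(N\cap C_1(M))/C_1(N)$ is finite-dimensional.

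The first step is to describe $C_1(M)$. As $L[r]=U(\W^-)v_{1,r}$, we have $M=U(\W^-)w+N$, so
\[
C_1(M)=\I M=\I w+\I N=\I w+C_1(N).
\]
Hence $N\cap C_1(M)=(N\cap \I w)+C_1(N)$. It remains to control $N\cap\I w=\{bw: b\in\I,\ bw\in N\}$.

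Now take $b\in\I$ with $bw\in N$. Applying $\pi$ gives $bv_{1,r}=0$. Lemma \ref{intersection} then yields $bw\in C_1(N)$. Thus $N\cap C_1(M)=C_1(N)$, so $\phi$ is injective and $\dim N/C_1(N)\le \dim M/C_1(M)<\infty$.

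The only delicate point is that $M$ need not be generated by $w$ over $U(\W^-)$ alone: one must use $M=U(\W)w+N$ and check that $U(\W)w\subset U(\W^-)w+N$. This holds because $\W^{\ge0}$ acts on $v_{1,r}$ by scalars and zero, so $\W^{\ge 0}$ maps $w$ into $\C w+N$. A PBW argument, ordering $U(\W)=U(\W^-)U(\W^{\ge0})$, then gives $U(\W)w\subset U(\W^-)w+N$. If $W(0)$ acts non-semisimply, $W(0)w-h_Ww$ still lies in $N$, so the argument is unaffected. Finite generation of $N$ is not needed beyond ensuring that $N$ is an object to which $C_1$-cofiniteness applies (grading-restricted). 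The main obstacle, the intersection $N\cap\I w$, is fully handled by Lemma \ref{intersection}.
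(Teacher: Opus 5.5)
Your proof is correct. It rests on the same two ingredients as the paper's: the decomposition $C_1(M)=C_1(N)+\I w$ coming from $M=N+U(\W^-)w$, and Lemma \ref{intersection}.

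The difference is in how you close the argument.

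\textbf{The paper's route.} It applies the Lemma only to vectors $A(-1)^K u$ with $u\in N$. These lie in $C_1(M)$ for $K\gg0$ because $L(-1)$ and $W(-1)$ act nilpotently on the finite-dimensional graded space $M/C_1(M)$. From this it concludes that high powers of $L(-1)$ and $W(-1)$ push each generator of $N$ into $C_1(N)$. It then uses finite generation of $N$ to leave only finitely many surviving classes $W(-1)^aL(-1)^bu_j$.

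\textbf{Your route.} You apply the Lemma to all of $N\cap \I w$. This gives $N\cap C_1(M)=C_1(N)$, hence an injection $N/C_1(N)\hookrightarrow M/C_1(M)$. What this buys:
\begin{itemize}
\item It is shorter and yields the explicit bound $\dim N/C_1(N)\le \dim M/C_1(M)$.
\item It drops the finite-generation hypothesis entirely.
\item It avoids a step the paper passes over quickly. Reducing $N$ modulo $C_1(N)$ to the span of $W(-1)^aL(-1)^b u_j$ tacitly requires handling the action of $\W^{\ge 0}$ on the generators, e.g.\ by enlarging the generating set to a $\W^{\ge0}$-stable finite-dimensional subspace.
\end{itemize}
Since the paper's argument also invokes the Lemma for arbitrary $b\in\I$ with $bv_{1,r}=0$, your version loses nothing in generality.

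Two small remarks on your write-up:
\begin{itemize}
\item Your ``delicate point'' paragraph is unnecessary. $L[r]=U(\W^-)v_{1,r}$ already gives $M=U(\W^-)w+N$ directly: for $m\in M$, choose $X\in U(\W^-)$ with $\pi(m)=Xv_{1,r}$; then $m-Xw\in N$.
\item Grading-restrictedness of $N$ is automatic for a submodule of a grading-restricted $M$. So finite generation is not needed for that purpose either.
\end{itemize}
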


\begin{proof}
Let $w\in M$ lift the highest-weight vector of $L[r]$. The decompos
\[
  M=N+U(\W^-)w,\qquad\text{implies}\qquad
  C_1(M)=C_1(N)+\I w.
\]

The operators $x=L(-1)$ and $y=W(-1)$ preserve $C_1(M)$ and raise conformal weight by $1$. Since $M/C_1(M)$ is finite-dimensional and generalized $L(0)$-graded, $x$ and $y$ act nilpotently on $M/C_1(M)$. Thus for $u\in N$, $A\in\{L,W\}$ and $K\gg0$, we have
\[
  A(-1)^Ku\in C_1(M).
\]
Write
\[
  A(-1)^Ku=a+bw,
  \qquad
  a\in C_1(N),\quad b\in I_{\ge2}.
\]
Since $A(-1)^Ku\in N$, we have $bw\in N$, i.e., $b v_{1,r}=0$ in $L[r]$. By Lemma \ref{intersection} we get $bw\in C_1(N)$.
Thus $
  A(-1)^Ku\in C_1(N)$.

Choose finite generators $u_1,\ldots,u_m$ of $N$. Modulo $C_1(N)$, every vector of $N$ is represented as a linear combination of
\[
  W(-1)^aL(-1)^b u_j,\qquad 1\le j\le m.
\]
For each $j$, sufficiently high powers of $L(-1)$ and $W(-1)$ send $u_j$ into $C_1(N)$. Since $C_1(N)$ is stable under $L(-1)$ and $W(-1)$, only finitely many such classes survive. Hence $\dim N/C_1(N)<\infty$ proving the claim.
\end{proof}

\begin{proposition}
Every grading-restricted $C_1$-cofinite $\LW_c$-module $M$ admits a finite filtration
\[
  0=M_0\subset M_1\subset\cdots\subset M_n=M
\]
whose successive quotients are highest-weight $\LW_c$-modules.
\end{proposition}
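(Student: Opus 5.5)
We will build the filtration by induction on $\dim M/C_1(M)$, using the key fact that for a $C_1$-cofinite grading-restricted module $M$, the finite-dimensional space $M/C_1(M)$ generates $M$. First we record this generation statement: since $M$ is grading-restricted, its generalized $L(0)$-weights are bounded below in each coset of $\C/\Z$. An induction on weight then shows $M=U(\W^-)T$ for any finite-dimensional graded $T\subset M$ with $T+C_1(M)=M$. Indeed, $C_1(M)=\I M$, and $\I$ lowers nothing (every element of $\I$ raises weight by at least $2$). In each weight, $M_{(\lambda)}=T_{(\lambda)}+\sum (\text{elements of }\I) M_{(\lambda-k)}$ with $k\ge 2$, and lower weights are already in $U(\W^-)T$. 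In particular $M$ is finitely generated.

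Next we extract a highest weight submodule. Choose a generalized $L(0)$-eigenvector $v\in T$ of minimal real part of weight (within a fixed coset). Since $M$ is grading-restricted and the lowest weights of $M$ appear in $T$, we can take $v$ to be a lowest weight vector, so $\W^+v=0$. The action of $\W^0$ on the finite-dimensional lowest weight space is by the commuting operators $L(0)$, $W(0)$ (the central charges are fixed). We pick $v$ to be a simultaneous eigenvector of $L(0)$ and $W(0)$, which is possible because they commute on a finite-dimensional space. Then $M_1:=U(\W)v=U(\W^-)v$ is a highest weight module. Being a quotient of a Verma module, it has finite length with well-understood structure by Theorem~\ref{W-struc}.

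For the induction step, let $M'=M/M_1$. It is grading-restricted, and $C_1(M')$ is the image of $C_1(M)$, so $\dim M'/C_1(M')\le \dim M/C_1(M)$. We want strict decrease, which needs $v\notin C_1(M)$. This holds because $v$ is of lowest weight and $\I$ strictly raises weight, so $v$ is not in $\I M$. Hence the image of $v$ in $M/C_1(M)$ is nonzero and dies in $M'/C_1(M')$, and $\dim$ drops by at least one. Since $M'$ is again $C_1$-cofinite and grading-restricted, induction gives a filtration of $M'$ with highest weight quotients. Pulling it back and prepending $M_1$ yields the desired filtration of $M$. The base case $\dim M/C_1(M)=0$ forces $M=0$ by the generation statement.

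The main subtlety is the generation step and the choice of $v$. Real parts of weights may differ across $\C/\Z$-cosets, so we handle each coset separately; $M$ splits as a direct sum over cosets as a $\W$-module. We also need a lowest weight vector that is a genuine eigenvector for $W(0)$ as well as $L(0)$. The latter is the step where non-semisimplicity of $L(0)$ and $W(0)$ could cause trouble. It is resolved by taking a common eigenvector in the lowest weight space, which is annihilated by $\W^+$ for weight reasons regardless of semisimplicity.
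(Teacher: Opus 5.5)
Your proof is correct and is essentially the Virasoro argument of \cite{C.et.al} that the paper cites: induction on $\dim M/C_1(M)$, splitting off the highest-weight submodule generated by a lowest-weight vector (which lies outside $C_1(M)=\I M$). You also make the one adaptation that $W(2,2)$ requires, namely taking that vector to be a common eigenvector of the commuting operators $L(0)$ and $W(0)$. Your side remark that a quotient of a Verma module has finite length is false in general: for $p\in\N$, $V[p,r]$ contains the infinite chain $V[p,r]\supset V[p,r-2]\supset V[p,r-4]\supset\cdots$. Your argument never uses that remark, so the proof stands.
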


\begin{proof}
The proof is the same as the Virasoro case (Proposition 3.1.2. in \cite{C.et.al}), so we omit it here.
\end{proof}

\begin{theorem}\label{C1fl}
Let $M$ be a grading-restricted $C_1$-cofinite $\LW_c$-module. Then $M$ has a finite filtration whose successive irreducible factors are of the form $L[r]$, for some $r\in\N$.
Moreover, every submodule occurring in such a filtration is
$C_1$-cofinite.
\end{theorem}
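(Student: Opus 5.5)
We argue by induction on the length $n$ of the filtration given by the preceding proposition, taking $0=M_0\subset M_1\subset\cdots\subset M_n=M$ with each $M_i/M_{i-1}$ a highest-weight $\LW_c$-module. The plan is to prove two things together. First, every $M_i$ is $C_1$-cofinite. Second, every quotient $M_i/M_{i-1}$ is $C_1$-cofinite. Granting both, Proposition \ref{prop:classification} shows that each $M_i/M_{i-1}$ is irreducible and isomorphic to some $L[r_i]$, which gives the statement.

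We work from the top down. The module $M_n=M$ is $C_1$-cofinite by hypothesis. Quotients of $C_1$-cofinite modules are $C_1$-cofinite, since $C_1$ maps onto $C_1$ of the quotient. Hence the highest-weight module $M_n/M_{n-1}$ is $C_1$-cofinite, and Proposition \ref{prop:classification} gives $M_n/M_{n-1}\cong L[r_n]$. This yields an exact sequence
\[
0\longrightarrow M_{n-1}\longrightarrow M_n\longrightarrow L[r_n]\longrightarrow 0 .
\]
To apply Proposition \ref{prop:extension} we need $M_{n-1}$ to be finitely generated. This holds because $M_{n-1}$ has a finite filtration by highest-weight modules, each of which is cyclic. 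Lifting the highest-weight vectors of $M_1,M_2/M_1,\dots,M_{n-1}/M_{n-2}$ therefore gives finitely many generators of $M_{n-1}$. Proposition \ref{prop:extension} then says $M_{n-1}$ is $C_1$-cofinite. Iterating this step for $i=n-1,\dots,1$ shows that every $M_i$ is $C_1$-cofinite and every $M_i/M_{i-1}\cong L[r_i]$. This proves both claims, including the second assertion of the theorem about submodules in the filtration.

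The main subtlety is that the filtration in the preceding proposition must be ordered so that it is a chain of submodules of $M$; it is, since it is a chain $M_0\subset\cdots\subset M_n$. The peeling argument also relies on the top quotient being $C_1$-cofinite, which is automatic. We also need each $M_i$ to be grading-restricted so that Proposition \ref{prop:classification} and Proposition \ref{prop:extension} apply. This is inherited from $M$, because submodules and quotients of grading-restricted generalized modules are grading-restricted. Once these points are checked, the rest is formal.
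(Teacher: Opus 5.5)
Your proposal is correct and follows the paper's own argument: descending induction through the highest-weight filtration, using that quotients of $C_1$-cofinite modules are $C_1$-cofinite, Proposition~\ref{prop:classification} to identify each factor as some $L[r]$, and Proposition~\ref{prop:extension} to pass $C_1$-cofiniteness down to the next submodule. The paper leaves implicit that each $M_{i}$ is finitely generated, as Proposition~\ref{prop:extension} requires; you check this explicitly by lifting the cyclic generators of the highest-weight factors, which is a welcome addition.
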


\begin{proof}
By the previous proposition, $M$ has a finite highest-weight filtration.
Starting with $M$ and descending through the filtration, each quotient is a quotient of a $C_1$-cofinite module and is therefore $C_1$-cofinite.
By Proposition \ref{prop:classification} these factors are precisely $L[r]$, $r\in\N$. The Proposition \ref{prop:extension} then shows, by descending induction, that every filtration submodule is $C_1$-cofinite.
\end{proof}
%

\begin{theorem}\label{thm:Huang}
The category $\cC$ admits a braided tensor category structure. In particular, the tensor product in $\cC$ is associative.
\end{theorem}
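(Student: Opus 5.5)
The plan is to combine Huang's theorem \cite{H} with Theorem \ref{C1fl}, following the same route as \cite{C.et.al} for the Virasoro algebra and \cite{ALY} for $W(2,4)$. Huang proved that for a vertex operator algebra $V$, the category of grading-restricted $C_1$-cofinite generalized $V$-modules is closed under the $P(z)$-tensor product. Moreover, if this category is closed under taking contragredients and is abelian (more precisely, closed under finite direct sums, submodules and quotients), then it carries the vertex tensor category structure of Huang--Lepowsky--Zhang, and in particular a braided tensor category structure with associative tensor product. So the proof reduces to checking these closure properties for $\cC$ and the technical hypotheses on $\LW_c$ in \cite{H}.

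First, $\LW_c$ is $\Z_{\ge0}$-graded with $(\LW_c)_{(0)}=\C\1$ and finite-dimensional homogeneous spaces. It is also $C_1$-cofinite in Li's sense, since it is strongly generated by $\o$ and $W$, both of weight $2$. These are the hypotheses of \cite{H}.

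Second, I will show that $\cC$ coincides with the category $\mathcal{F}$ of finite length grading-restricted generalized modules whose composition factors are among the $L[r]$, $r\in\N$. The inclusion $\cC\subseteq\mathcal{F}$ is Theorem \ref{C1fl}. For the reverse inclusion, let $M\in\mathcal{F}$. Every $L[r]$ is $C_1$-cofinite. An extension $0\to N\to M\to M/N\to0$ with both ends $C_1$-cofinite is again $C_1$-cofinite, because $M/C_1(M)$ surjects onto $(M/N)/C_1(M/N)$ with kernel a quotient of $N/C_1(N)$. Induction on length then gives $M\in\cC$. Also, $M$ is grading-restricted because each $L[r]$ has finite-dimensional weight spaces and lower-bounded weights, and there are only finitely many factors. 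Since $\mathcal{F}$ is evidently closed under submodules, quotients and finite direct sums (lengths and composition factors behave additively), $\cC$ is an abelian full subcategory of the module category.

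Third, I will check closure under contragredients. For $M\in\cC$, the contragredient $M'$ has finite length, with composition factors the contragredients $L[r]'$. A simple module with lowest weight space spanned by a vector of weight $(\tfrac{1-r}{2},0)$ has contragredient again a simple highest weight module. The $L(0)$-eigenvalue is unchanged, and the $W(0)$-eigenvalue is $0$ up to the sign coming from the opposite-action involution $W(n)\mapsto(-1)^{\rm wt}\,W(-n)$. Hence $L[r]'\cong L[r]$, and $M'\in\mathcal{F}=\cC$.

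The main obstacle is verifying Huang's precise hypotheses. One must confirm that the theorem of \cite{H} (together with the convergence and extension property supplied by $C_1$-cofiniteness) needs only these closure properties. One must also make sure that generalized modules with non-semisimple $L(0)$, and hence non-semisimple $W(0)$, cause no problem; here the results of \cite{H} are stated for generalized modules, so this should be fine. I would cite \cite{H} for the tensor product and associativity, and \cite{C.et.al}, Theorem 3.3.4-style arguments, for the packaging into a braided tensor category.
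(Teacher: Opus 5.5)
Your proposal is correct and follows the same route as the paper. The braided monoidal structure comes from \cite{H}, and abelianness comes from identifying $\cC$ with the category of finite-length modules whose composition factors are the $L[r]$, via Theorem \ref{C1fl}. Your additional checks make explicit details the paper's short proof leaves implicit: the reverse inclusion via the estimate on $C_1$-codimension in extensions, grading-restriction of finite-length modules, and $L[r]'\cong L[r]$ for closure under contragredients.
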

\begin{proof}
    By \cite{H}, the category $\cC$ is a braided monoidal category. Since it is the same as the category of finite length modules whose composition factors are $L[r]$ for $r \in \Z_{>0}$, it is also an abelian category. Therefore $\cC$ is a braided tensor category.
\end{proof}

\section{The fusion rules in the category $\mathcal C$}

In this section we calculate some fusion rules for $C_1$--cofinite modules.  We  apply Frenkel-Zhu's formula for fusion rules \cite{FZ}, with some refinement made by H. Li in \cite{Li}. As a consequence, we determine the tensor product $L[r] \boxtimes  L[s]$ in the category $\cC$ and prove rigidity using the Etingof--Penneys  criterion from \cite{EP}.

\subsection{Zhu algebras and Frenkel-Zhu bimodules}
 
\begin{definition} \cite{FZ}
Let $V$ be a vertex operator algebra and let $M$ be a $V$-module. Set
\[
O(M)=\operatorname{span}\left\{\Res_z\, Y(a,z)m\frac{(1+z)^{\wt(a)}}{z^2}\;\middle|\; a\in V,\; m\in M\right\}.
\]
The quotient
\[
A(M)=M/O(M)
\]
is called the \emph{Frenkel-Zhu bimodule} of $M$. It carries a natural $A(V)$-bimodule structure induced by the operations: 
\begin{align*}
a*m &= \Res_z\, Y(a,z)m\frac{(1+z)^{\wt(a)}}{z},\\
m*a &= \Res_z\, Y(a,z)m\frac{(1+z)^{\wt(a)-1}}{z}
\end{align*}
for homogeneous $a\in V$ and $m\in M$.
\end{definition}

We are focused on proving the existence of intertwining operators. H.\ Li proved the following refinement of Frenkel-Zhu's theorem.
 
\begin{theorem} \cite{Li} \label{li}
\item[(1)]Assume that $M_1, M_2, M_3 $  are  $V$-modules and $M_3$ is irreducible. Then there is injective homomorphism
 $$ \pi :  I\binom{M_3}{M_1\ M_2} \hookrightarrow  \operatorname{Hom}_{A(V)} \left(A(M_1) \otimes M_2(0), M_3(0) \right). $$ 

\item[(2)]If $M_2 $ and $M_3 '$ are generalized Verma $V$-modules, then $\pi$ is an isomorphism.

\end{theorem}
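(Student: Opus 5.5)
The statement is H.\ Li's refinement of the Frenkel--Zhu theorem, so the plan is to reconstruct his argument, which has three parts: define $\pi$, show it is injective, and invert it when $M_2$ and $M_3'$ are generalized Verma modules.

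\textbf{Defining $\pi$.} Take $\mathcal Y\in I\binom{M_3}{M_1\ M_2}$. For homogeneous $a\in M_1$, write $o(a)$ for the component of $\mathcal Y(a,z)$ that preserves the relative grading; up to an overall shift of exponents fixed by the lowest weights, this is the coefficient $a_{\wt a-1}$. Set
\[
\pi(\mathcal Y)(a\otimes v)=o(a)v\in M_3(0),\qquad v\in M_2(0),
\]
where we project onto the top level $M_3(0)$. Two identities then have to be checked.
\begin{enumerate}[(i)]
\item $o$ vanishes on $O(M_1)$ when restricted to $M_2(0)$ and projected to $M_3(0)$. Expand $\Res_z Y(b,z)a\,\frac{(1+z)^{\wt b}}{z^2}$ using the Jacobi identity for intertwining operators. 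The resulting terms involve $b$-modes that either annihilate $M_2(0)$ (positive modes) or raise degree past the top of $M_3$.
\item $o(b*a)=o(b)o(a)$ and $o(a*b)=o(a)o(b)$ on $M_2(0)$, after projecting to $M_3(0)$. These are the associativity and commutator formulas for $\mathcal Y$, exactly as in Zhu's proof that $o(b*c)=o(b)o(c)$ on top levels.
\end{enumerate}
Together, (i) and (ii) show that $\pi(\mathcal Y)$ is a well-defined $A(V)$-module map $A(M_1)\otimes_{A(V)}M_2(0)\to M_3(0)$.

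\textbf{Injectivity.} Suppose $\pi(\mathcal Y)=0$. Pair with $w'\in M_3(0)^*$, which is the top level of $M_3'$. Using $L(-1)$-derivative and commutator relations, every matrix coefficient $\langle w',\mathcal Y(a,z)v\rangle$ with $v\in M_2(0)$ reduces to $\langle w',o(a')v\rangle$ for suitable $a'\in M_1$, so all of them vanish. Since $M_3$ is irreducible, $M_3'$ is irreducible and hence generated by its top level. It follows that $\mathcal Y(a,z)v=0$ for all $a\in M_1$ and $v\in M_2(0)$. Next, the set $\{v\in M_2:\mathcal Y(\cdot,z)v=0\}$ is a $V$-submodule, by the Jacobi identity together with a Dong--Mason-type annihilator argument. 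This submodule contains $M_2(0)$. If $M_2$ is generated by $M_2(0)$, we conclude $\mathcal Y=0$.

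This last step is where care is needed. In full generality one instead uses the skew-symmetry $\mathcal Y\mapsto\Omega(\mathcal Y)$ and the fact that a nonzero intertwining operator into an irreducible module must hit the top level. I expect this to be the delicate point of part (1).

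\textbf{Surjectivity in case (2).} Given $f\in\operatorname{Hom}_{A(V)}(A(M_1)\otimes M_2(0),M_3(0))$, we construct $\mathcal Y$ following Li. Using that $M_2=\bar M(M_2(0))$ is the generalized Verma module, form the induced object
\[
\mathcal L(V)\otimes M_1\otimes_{A(V)} M_2(0)
\]
built from modes of $M_1$ acting on $M_2(0)$. Then define matrix coefficients $\langle w',\mathcal Y(a,z)v\rangle$ for $w'\in M_3(0)^*$ and $v\in M_2(0)$ through $f$, and extend them to all of $M_3'\times M_2$ via the universal properties of the generalized Verma modules $M_2$ and $M_3'$. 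The main obstacle is verifying the Jacobi identity for this extended $\mathcal Y$. I would try to reduce it, by the same universality, to the relations already encoded in $O(M_1)$ and the bimodule actions. That reduction is precisely where the hypothesis that both $M_2$ and $M_3'$ are generalized Verma modules is used: it guarantees there are no extra relations to verify.
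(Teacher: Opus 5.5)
\textbf{Part (1).} Your injectivity argument is correct when $M_2$ is generated by $M_2(0)$. In that case the chain works: all top-level matrix coefficients vanish; hence $\mathcal Y(\cdot,z)M_2(0)=0$, using that $M_3'$ is spanned by the $b_nw'$ with $w'\in M_3(0)^*$ together with the commutator formula; and the subspace of $M_2$ killed by $\mathcal Y$ is a submodule.

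The skew-symmetry route you propose for the general case would fail. Knowing that a nonzero $\mathcal Y$ into an irreducible $M_3$ reaches $M_3(0)$ only gives you some $a\in M_1$ and $v\in M_2$ of arbitrary degree. Passing to $\Omega(\mathcal Y)$ just swaps the two arguments, so nothing forces $v$ into $M_2(0)$.

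In fact no argument can work, because part (1) with only $M_3$ irreducible is false. Take $M_1=M_3=L[1]$ and $M_2=L[3]\oplus L[1]$. Since $h[1,3]=-1$, $M_2(0)$ is the top of $L[3]$, so
\[
\operatorname{Hom}_{A(V)}\bigl(A(L[1])\otimes_{A(V)}M_2(0),\,L[1](0)\bigr)=\operatorname{Hom}_{A(V)}\bigl(M_2(0),\C\mathbf{1}\bigr)=0,
\]
because $[\omega]$ acts by $-1$ on the source and by $0$ on the target. On the other hand, $\mathcal Y(a,z)(w+u)=Y(a,z)u$ (for $w\in L[3]$, $u\in L[1]$) is a nonzero intertwining operator of type $\binom{L[1]}{L[1]\ M_2}$.

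The correct repair is to add the hypothesis that $M_2$ is generated by $M_2(0)$, for example that $M_2$ is irreducible. This costs nothing in the paper: every use of part (1) goes through Theorem~\ref{thm:FZ-general}, where $M_2$ is irreducible.

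\textbf{Part (2).} This is a plan rather than a proof. Constructing $\mathcal Y$ from $f$, including checking the Jacobi identity, is the entire content of (2), and you leave it as something you ``would try to reduce''.

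Also, ``extend via the universal properties of $M_2$ and $M_3'$'' does not make sense as stated. The universal property of a generalized Verma module $\bar M(U)$ produces $V$-module homomorphisms out of $\bar M(U)$, not matrix coefficients or intertwining operators.

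The missing idea is a mechanism that turns intertwining operators into module maps, so that universality can be applied. One example is a Hom-type weak $V$-module $\mathcal H$ built from $M_1$ and $M_3$ with $I\binom{M_3}{M_1\ M_2}\cong\operatorname{Hom}_V(M_2,\mathcal H)$, via $\mathcal Y\mapsto(v\mapsto\mathcal Y(\cdot,z)v)$. With such an $\mathcal H$, the hypothesis $M_2=\bar M(M_2(0))$ reduces (2) to producing lowest-weight vectors of $\mathcal H$ that realize $f$. That production step, where the Jacobi identity is actually verified, is where $M_3'=\bar M(M_3(0)^*)$ must enter. None of this appears in your sketch.
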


Let $V=\LW_c$ be the simple vertex operator algebra of $W(2,2)$. One has
\[
A(V)\cong\C[x,y],
\]
where $x=[\o]$ and $y=[W]$.

For any $V$-module $M$ and any $m\in M$, the left and right actions of $x$ and $y$ are given by
\begin{align}
[\o]*[m] &= [(L({-2})+2L({-1})+L(0) )m], \label{eq:left-omega}\\
[m]*[\o] &= [(L({-2})+L({-1}))m], \label{eq:right-omega}\\
[W]*[m] &= [(W({-2})+2W({-1})+W(0) )m], \label{eq:left-w}\\
[m]*[W] &= [(W(-2)+W({-1}))m]. \label{eq:right-w}
\end{align}

Let $M$ be a $\Zp$--graded $V$-module, generated by the top space $M_{top}$.
We equip 
\[\C[x_1, x_2, y_1, y_2] \otimes  M_{top}\]
with the structure of $\C[x,y]$-module such that the left action of $\C[x,y]$ is through $x_1, y_1$ and the right action through $x_2, y_2$. 

\begin{proposition}
We have a surjective homomorphism  
 $$ \Phi: \C[x_1, x_2, y_1, y_2] \otimes  M_{top} \rightarrow A(M), $$
uniquely determined by 
\begin{align*}
\Phi(x_1\otimes[v])&=[((L(-2)+2L(-1)+L(0))v)],\\
\Phi(x_2\otimes[v])&=[((L(-2)+L(-1))v)],\\
\Phi(y_1\otimes[v])&=[(W({-2})+2W({-1})+W(0))v],
\\
\Phi(y_2\otimes[v])&=[(W({-2})+W({-1}))v],
\end{align*}
where $v \in M_{top}$.
If $M$ is a Verma module, then $\Phi$ is an isomorphism. 
\end{proposition}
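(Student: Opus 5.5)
We need to prove two things: that $\Phi$ is a well-defined surjection, and that it is injective when $M$ is a Verma module. The approach is the standard Frenkel--Zhu style argument, adapted to the two generating fields $\o$ and $W$.

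\textbf{Well-definedness and surjectivity.} First observe that $A(M)$ is an $A(V)$-bimodule and $A(V)\cong\C[x,y]$ is commutative. We can therefore define $\Phi(f(x_1,y_1)g(x_2,y_2)\otimes v)=f(x,y)*[v]*g(x,y)$. This is a well-defined bimodule map because the left and right actions commute with each other, and each of them is an action of a commutative algebra. On generators it agrees with the stated formulas by (\ref{eq:left-omega})--(\ref{eq:right-w}).

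For surjectivity we show that $O(M)$ contains, for $a\in\{\o,W\}$ and $n\ge 3$, relations expressing $[a(-n)m]$ through $[a(-2)m]$ and $[a(-1)m]$ and lower-length terms. Concretely, $\Res_z Y(a,z)m(1+z)^{2}z^{-k}\in O(M)$ for $k\ge 2$ (this is standard, using $L(-1)$-derivative relations). This gives
\[
[L(-n)m]\equiv\text{combination of }[L(-2)m],[L(-1)m],[L(0)m],
\]
and similarly for $W$.

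Subtracting the right action from the left action then isolates $[(L(-1)+L(0))m]$ and $[(W(-1)+W(0))m]$. Hence every class $[a(-n)m]$ with $n\ge1$ lies in $x*[m]$, $[m]*x$, $y*[m]$, $[m]*y$ and $[m]$ (using that $L(0)$, $W(0)$ act on homogeneous vectors).

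Now take a PBW monomial $a_1(-n_1)\cdots a_k(-n_k)v$ with $v\in M_{top}$. Induction on the length $k$ (and on weight) writes its class as a polynomial in the four operators applied to classes of shorter monomials. Since $M=U(\W^-)M_{top}$, this proves surjectivity.

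\textbf{Injectivity for Verma modules.} This is the main obstacle. I would compare sizes via a filtration rather than compute $O(M)$ directly. Filter $A(M)$ by PBW length. From the reduction above, the associated graded is spanned by the images of
\[
L(-2)^{a}L(-1)^{b}W(-2)^{c}W(-1)^{d}v .
\]
To show these are linearly independent in $A(M)$, I would use a known result: for a generalized Verma module $M=\mathrm{Ind}\,M_{top}$ one has $A(M)\cong A(V)\otimes M_{top}\otimes A(V)$ (Frenkel--Zhu; see also Li's treatment behind Theorem \ref{li}(2)).

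Since $M$ is a Verma module for the vertex algebra, it is a generalized Verma module induced from the top level $M_{top}\cong\C v$, viewed as an $A(V)$-module. The result then identifies $A(M)$ with $A(V)\otimes_{\C}M_{top}\otimes_{\C}A(V)$, that is, with $\C[x_1,y_1]\otimes M_{top}\otimes\C[x_2,y_2]$.

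Under this identification $\Phi$ is the identity on generators, so the surjection $\Phi$ is an isomorphism. If one prefers a self-contained argument, construct the inverse map $A(M)\to\C[x_1,x_2,y_1,y_2]\otimes M_{top}$ from PBW coordinates and check that it kills $O(M)$. That check is the delicate computation, and I would avoid it by citing the Frenkel--Zhu and Li identification.
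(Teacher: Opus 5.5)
Your surjectivity argument is essentially right, with one slip. $W(0)$ does not act by a scalar on homogeneous vectors of a Verma module; for example $W(0)L(-1)v=h_WL(-1)v+W(-1)v$. So $[W(-1)m]=y*[m]-[m]*y-[W(0)m]$ with $[W(0)m]\notin\C[m]$ in general. This is harmless: $W(0)$ sends a product of $k-1$ negative modes applied to $M_{top}$ to a combination of such products (it only replaces some $L(-j)$ by $jW(-j)$), so the term is covered by induction on length. The paper states this proposition without proof, so there is no argument there to compare with.

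\textbf{The gap is injectivity.} The ``known result'' that $A(M)\cong A(V)\otimes M_{top}\otimes A(V)$ for every generalized Verma module is not a theorem of Frenkel--Zhu or Li, and it is false in that generality. Take the Heisenberg vertex algebra and the Fock module $M(1,\lambda)$. There $h*m-m*h=h(0)m=\lambda m$, so $x_1-x_2-\lambda$ annihilates $A(M(1,\lambda))$, which is impossible for $\C[x_1,x_2]$. For rational $V$ the induced modules are even smaller. The answer depends on the conformal weights of the generators. Frenkel--Zhu carried out the weight-two case only for the Virasoro algebra, and for $W(2,2)$ the identification you cite is exactly the statement to be proved. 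Theorem \ref{li}(2) concerns $\operatorname{Hom}$-spaces when $M_2$ and $M_3'$ are generalized Verma modules and says nothing about $A(M_1)$. Your PBW remark only restates surjectivity. The missing content is the linear independence of the classes of $L(-2)^aL(-1)^bW(-2)^cW(-1)^dv$, which you explicitly postponed.

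\textbf{One way to supply it.}
\begin{itemize}
\item The elements $a(-n-3)+2a(-n-2)+a(-n-1)$, for $a\in\{L,W\}$ and $n\ge0$, span a Lie subalgebra $\mathfrak L\subset\W^-$. They correspond to the functions $(1+z)^2z^{-2-n}$, brackets give multiples of $(1+z)^4z^{-5-n-m}$, and no central terms occur.
\item One has $\mathfrak LM\subseteq O(M)$ and $\W^-=\mathfrak L\oplus\spn\{L(-1),L(-2),W(-1),W(-2)\}$.
\item A Verma module is free over $U(\W^-)$. PBW therefore shows that the image in $M/\mathfrak LM$ of the length-$\le k$ part of $M$ has dimension $\binom{k+4}{4}=\dim\C[x_1,x_2,y_1,y_2]_{\le k}$.
\item Your surjectivity argument is compatible with the length filtration, so a dimension count gives injectivity.
\end{itemize}
This works only once you prove $O(M)\subseteq\mathfrak LM$. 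That is, the relations $\Res_zY(u,z)m(1+z)^{\wt u}z^{-2}$ for arbitrary $u\in V$, not just $u=\o,W$, must already lie in $\mathfrak LM$. Proving this Frenkel--Zhu-type lemma (for instance by induction on the length of $u$ via the Jacobi identity), or equivalently checking that your inverse map kills $O(M)$, is the real content your proposal skips.
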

 
Consider now the irreducible module
$
M_1=L[1,2]
$
with highest weight vector $v$. It is realized as a quotient of $V[1,2]$ by the maximal submodule $\overline V[1,2]$ generated by singular and subsingular vectors
\begin{equation}\label{eq:singrels}
u_1=W(-1)v 
\qquad
s_{1,2}=\left(L(-1)^2+\frac{6}{c}W(-2)\right)v 
\end{equation}

\begin{proposition}\label{prop:bimodule-presentation}
There is an isomorphism of $A(V)$--bimodules:
\[
\frac{\mathbb C[x_1,x_2,y_1,y_2]}{\left(y_1-y_2,\; (x_1-x_2)^2-\frac14+\frac{6}{c}y_2\right)}
\cong  A(M_1),
\]
sending $1$ to $[v]$.
\end{proposition}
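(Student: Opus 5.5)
By Theorem \ref{li}-style right exactness of the Frenkel--Zhu construction, the natural surjection $V[1,2]\to M_1$ induces a surjection $A(V[1,2])\to A(M_1)$ whose kernel is the image of $A(\overline V[1,2])$. I will therefore use the preceding Proposition for the Verma module $V[1,2]$, which gives
\[
A(V[1,2])\cong \C[x_1,x_2,y_1,y_2]\otimes \C v,
\]
and then identify the image of $\overline V[1,2]$ with the sub-bimodule generated by two explicit polynomials.

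\textbf{Step 1: the image of the maximal submodule is generated by $[u_1]$ and $[s_{1,2}]$.} By (\ref{max-sub-gen}), $\overline V[1,2]=U(\W^{\le0})\spn_\C\{u_1,s_{1,2}\}$. For homogeneous $m$, the relations in $O(M)$ give
\[
[L(-n-2)m]=\sum_{j}\alpha_j[L(-2)m],\ [L(-1)m],\ [L(0)m],
\]
and similarly for $W$. Moreover, (\ref{eq:right-omega}) and (\ref{eq:right-w}) express $[L(-2)m]$ and $[W(-2)m]$ through the right action and $[L(-1)m]$, $[W(-1)m]$. Finally,
\[
[\o]*[m]-[m]*[\o]=[(L(-1)+L(0))m],\qquad [W]*[m]-[m]*[W]=[(W(-1)+W(0))m],
\]
so $[L(-1)m]$ and $[W(-1)m]$ also lie in the bimodule generated by $[m]$. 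By induction on PBW length, every class $[g\,u_1]$ and $[g\,s_{1,2}]$ with $g\in U(\W^{\le0})$ lies in the sub-bimodule generated by $[u_1]$ and $[s_{1,2}]$. This reduction is the main technical step. I expect it to be routine but bookkeeping-heavy: one must check that $W(0)$ and the central terms, which act non-semisimply in general, cause no trouble, and it helps that $h_W[1,2]=0$.

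\textbf{Step 2: compute the generators.} For $p=1$, $r=2$ we have $h=h[1,2]=-\tfrac12$ and $h_W=0$. Then
\[
(y_1-y_2)[v]=[(W(-1)+W(0))v]=[W(-1)v]=[u_1].
\]
Next,
\[
(x_1-x_2)[v]=[(L(-1)-\tfrac12)v],\qquad\text{so}\qquad [L(-1)v]=(x_1-x_2+\tfrac12)[v].
\]
Applying the same commutator identity to $m=L(-1)v$, which has $L(0)$-eigenvalue $\tfrac12$, gives
\[
[L(-1)^2v]=(x_1-x_2-\tfrac12)(x_1-x_2+\tfrac12)[v]=\big((x_1-x_2)^2-\tfrac14\big)[v].
\]
Also $y_2[v]=[(W(-2)+W(-1))v]$, so $[W(-2)v]=y_2[v]-[u_1]$. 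Using (\ref{eq:singrels}),
\[
[s_{1,2}]=\Big((x_1-x_2)^2-\tfrac14+\tfrac6c y_2\Big)[v]-\tfrac6c(y_1-y_2)[v].
\]

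\textbf{Conclusion.} The kernel of $\C[x_1,x_2,y_1,y_2]\to A(M_1)$, $1\mapsto [v]$, is the ideal generated by $y_1-y_2$ and $(x_1-x_2)^2-\tfrac14+\tfrac6c y_2$. This follows because left and right multiplications by $x$ and $y$ correspond to multiplication by $x_1,y_1$ and $x_2,y_2$ respectively, and these commute. Surjectivity of the map comes from surjectivity of $\Phi$ composed with the quotient $A(V[1,2])\to A(M_1)$. Hence we obtain the claimed bimodule isomorphism.
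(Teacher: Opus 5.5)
Your proposal is correct and follows the same route as the paper. Both arguments write $A(M_1)$ as $A(V[1,2])\cong\C[x_1,x_2,y_1,y_2]$ modulo the image of $\overline V[1,2]=U(\W^{\le0})\spn_\C\{u_1,s_{1,2}\}$, and identify that image with the ideal generated by the images of $u_1$ and $s_{1,2}$. You justify the generation step, which the paper leaves implicit; since $W(0)s_{1,2}=2L(-1)u_1$, run the induction on all $g\,u_1$ before the $g\,s_{1,2}$. Your formula $[s_{1,2}]=\big((x_1-x_2)^2-\tfrac14+\tfrac6c y_2\big)[v]-\tfrac6c(y_1-y_2)[v]$ is the exact image, differing from the paper's stated one only by an element of the ideal.
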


\begin{proof}

We have that the $A(\overline V[1,2])$ is an $A(V)$ sub-bimodule of $A(V[1,2])$
generated by images of generators of $\overline V[1,2]$:
\begin{align*}
 [W(-1) v] &\mapsto (y_1-y_2);\\
 \left[\left(L(-1)^2+\frac{6}{c}W(-2)\right)v\right] &\mapsto  (x_1 - x_2) ^2 - \frac{1}{4} + \frac{6}{c} y_2. 
\end{align*}
The assertion now follows from \[A(M_1) =\frac{A(V[1,2])}{A(\overline V[1,2])}.\]
\end{proof}

\begin{theorem}\label{thm:FZ-general}
Let
$
M_2=L(c,h_2,w_2),
M_3=L(c,h_3,w_3)
$
be irreducible highest weight modules with one-dimensional top levels
$M_2(0)$ and $M_3(0)$.
Then
\[
\dim I\binom{M_3}{M_1\ M_2}\le 1,
\]
and this dimension can be nonzero only if
\[
w_3=w_2,
\qquad
(h_3-h_2)^2=\frac14-\frac{6}{c}w_2.
\]

In particular, if $M_2=M[p,r]$ and $M_3=L[p',r']$, we have
\[
\dim I\binom{L[p',r']}{L[1,2]\ \ M[p,r]}\le 1,
\]
and this dimension can be nonzero only if $p'=p$ and for $p\neq 0$ if
\[
r'=r\pm 1.
\]
\end{theorem}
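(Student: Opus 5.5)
We apply Theorem \ref{li}(1) with $M_1=L[1,2]$. Since $M_3$ is irreducible, $\pi$ embeds $I\binom{M_3}{M_1\ M_2}$ into $\operatorname{Hom}_{A(V)}\left(A(M_1)\otimes_{A(V)} M_2(0), M_3(0)\right)$. So it suffices to show that $A(M_1)\otimes_{A(V)}M_2(0)$ has dimension at most one, and to find exactly when it admits a nonzero $A(V)$-map to $M_3(0)$.

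First we describe the top levels as $A(V)=\C[x,y]$-modules. On $M_2(0)=\C v_2$ the generators act by $x\mapsto h_2$ and $y\mapsto w_2$. On $M_3(0)=\C v_3$ they act by $x\mapsto h_3$ and $y\mapsto w_3$; here $L(0)$ and $W(0)$ act by scalars because the top level is one-dimensional.

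Next we use Proposition \ref{prop:bimodule-presentation}. Tensoring on the right over $\C[x,y]$ with $M_2(0)$ specializes $x_2\mapsto h_2$ and $y_2\mapsto w_2$. This gives the left $\C[x,y]$-module
\[
A(M_1)\otimes_{A(V)}M_2(0)\cong \frac{\C[x_1,y_1]}{\left(y_1-w_2,\;(x_1-h_2)^2-\frac14+\frac{6}{c}w_2\right)},
\]
where $x,y$ act through $x_1,y_1$. This quotient is spanned by $1$ and $x_1$, so it is at most two-dimensional. A $\C[x,y]$-homomorphism to the one-dimensional module $M_3(0)$ is determined by the image of $1$. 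Such a map is nonzero only if the character $(x_1,y_1)\mapsto(h_3,w_3)$ kills both relations, which means
\[
w_3=w_2,\qquad (h_3-h_2)^2=\tfrac14-\tfrac6c w_2.
\]
Whenever such a map exists, it is unique up to scalar. Hence the Hom space has dimension at most one, and by injectivity of $\pi$ so does the space of intertwining operators. The one delicate point is checking that Li's theorem really applies with a tensor product over $A(V)$ of this form. The specialization itself is immediate, because $A(M_1)$ is presented with $x_2,y_2$ central.

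For the parametrized statement we substitute \eqref{param2} and \eqref{param3}. The condition $w_3=w_2$ becomes $\frac{1-p'^2}{24}c=\frac{1-p^2}{24}c$. Since $c\ne0$ and we assume $\operatorname{Re}p,\operatorname{Re}p'\ge0$, this forces $p'=p$. With $w_2=\frac{1-p^2}{24}c$ we get $\frac14-\frac6c w_2=\frac14-\frac{1-p^2}{4}=\frac{p^2}{4}$. On the other hand, $h[p,r']-h[p,r]=-\frac{(r'-r)p}{2}$, so the second condition reads $\frac{(r'-r)^2p^2}{4}=\frac{p^2}{4}$. For $p\neq0$ this gives $r'=r\pm1$.
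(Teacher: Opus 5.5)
Your argument is correct and is essentially the paper's proof. Both use Li's injection, specialize the presentation of $A(M_1)$ at $x_2=h_2$, $y_2=w_2$, use cyclicity of the resulting left $A(V)$-module to bound the Hom space by one, and then substitute the parametrization. Your opening claim that $A(M_1)\otimes_{A(V)}M_2(0)$ should be at most one-dimensional is false (it is two-dimensional), but you never actually use it; the cyclicity argument you give is the right one.

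The only real slip is in the parametrized part. The assumption $\operatorname{Re}p,\operatorname{Re}p'\ge 0$ does not force $p'=p$ when $p\neq 0$ is purely imaginary, because then $p'=-p$ also has real part $0$. As the paper does, you should invoke $\mathbf{h}[p,r]=\mathbf{h}[-p,-r-2]$ to absorb the sign before deriving $r'=r\pm1$.
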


\begin{proof}
Using  results from \cite{Li, FZ}  we get an injective map
\[
I\binom{M_3}{M_1\ M_2}
\hookrightarrow
\operatorname{Hom}_{A(V)}\bigl(A(M_1)\otimes_{A(V)}M_2(0),\, M_3(0)\bigr).
\]
Now $M_2(0)\cong \mathbb C_{(h_2,w_2)}$ so tensoring over the right
$A(V)$--action amounts to substitutions $x_2=h_2$ and $y_2=w_2$.
Thus Proposition \ref{prop:bimodule-presentation} yields a surjection
\[
\frac{\mathbb C[x_1,y_1]}
{\left(y_1-w_2,\; (x_1-h_2)^2-\frac14+\frac{6}{c}w_2\right)}
\twoheadrightarrow
A(M_1)\otimes_{A(V)}M_2(0).
\]
Hence $A(M_1)\otimes_{A(V)}M_2(0)$ is a cyclic left $A(V)$-module.
A nonzero $A(V)$-module map to
$M_3(0)\cong \mathbb C_{(h_3,w_3)}$ can exist only if the defining
relations vanish at $(x_1,y_1)=(h_3,w_3)$. Hence, necessarily
\[
w_3=w_2,
\qquad
(h_3-h_2)^2=\frac14-\frac{6}{c}w_2.
\]
We get that the Hom-space is at most one-dimensional. Therefore
\[
\dim I\binom{M_3}{M_1\ M_2}\le 1.
\]

Consider now the case
$M_2=M[p,r]$, $M_3=L[p,r']$. Recall parametrization of weights (\ref{param2})--(\ref{param3}).
The condition $w_3=w_2$ then yields $p'=p$ (due to (\ref{sim-p}) we can drop the sign change). Since
\[
h[p,r']-h[p,r]= -\frac{p}{2}(r'-r),
\]
the second condition then becomes
\[
\left(-\frac{p}{2}(r'-r)\right)^2
=\frac14-\frac6c h_W[p,r]
=\frac14-\frac6c\cdot\frac{1-p^2}{24}c
=\frac{p^2}{4}.
\]
Hence
\[
\frac{p^2}{4}(r'-r)^2=\frac{p^2}{4}.
\]
If $p\neq 0$, then $(r'-r)^2=1$, so $r'=r\pm 1$.
The proof follows.
\end{proof}

\begin{remark} 
From the proof of Theorem \ref{thm:FZ-general} it follows that we can have logarithmic intertwining operators of type
 $\binom{M_3}{M_1\ M_2}$ where $M_3$ is a logarithmic module only if $w_2 = \frac{c}{24}$, i.e., when $p=0$. But when $w_2 \ne \frac{c}{24}$ an intertwining operator can exist only for non-logarithmic modules $M_3$.
 \end{remark}

 \subsection{The existence of intertwining  operators}
Now we should prove the existence of some intertwining operators predicted by Theorem \ref{thm:FZ-general}.

 \begin{proposition} \label{prop:construction} For each $ r \in {\Bbb Z}_{>1}$,   
 there exist  non-trivial  intertwining operators of types
\[\binom{L[r \pm 1]}{L[2]\ \ L[r] }.\]
 \end{proposition}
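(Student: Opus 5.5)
Our approach is to construct the intertwining operators through the embedding of $\LW_c$ into the twisted Heisenberg--Virasoro vertex algebra $\HVir$ at level zero, as recalled in the introduction from \cite{AR1,AR2,AR3}. Concretely, $\LW_c \hookrightarrow \HVir$, and there are $\HVir$-modules (Fock-type or highest weight $\HVir$-modules) whose restrictions to $\LW_c$ contain the atypical modules $L[r]$. We take from \cite{AR1,AR2,AR3} the known fusion rules/intertwining operators for $\HVir$-modules; these are essentially Heisenberg-type, i.e.\ $\mathcal{F}_{\lambda}\times\mathcal{F}_{\mu}\to\mathcal{F}_{\lambda+\mu}$. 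The plan is to realize $L[2]$ and $L[r]$ as $\LW_c$-submodules (or subquotients) of $\HVir$-modules $\mathcal{F}_{a}$, $\mathcal{F}_{b}$. We then restrict an $\HVir$-intertwining operator $\mathcal{Y}$ of type $\binom{\mathcal{F}_{a+b}}{\mathcal{F}_a\ \mathcal{F}_b}$, possibly composed with screening operators, to $L[2]\otimes L[r]$.

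The key steps are as follows.

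\textbf{Step 1.} Identify the highest weight vectors. In a suitable $\HVir$-module, the vector $e^{\lambda}$ with the appropriate weight $(h,w)=(\frac{1-r}{2},0)$ generates, under $\LW_c$, a highest weight module $M$ whose top level has weight $\mathbf h[1,r]$.

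\textbf{Step 2.} Apply $\mathcal{Y}$ to the two highest weight vectors. We compute $\mathcal{Y}(e^{\lambda_2},z)e^{\lambda_r}$ and show that, in the target $\HVir$-module, the $\LW_c$-submodule generated by the relevant coefficients has a highest weight vector of weight $\mathbf h[1,r+1]$. A second coefficient, or the output of a screening operator, gives a highest weight vector of weight $\mathbf h[1,r-1]$. Theorem \ref{thm:FZ-general} already tells us these are the only possible targets, and that $W(0)$ acts by $0$ throughout, so logarithmic phenomena do not occur.

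\textbf{Step 3.} Pass to irreducible quotients. A nonzero intertwining operator $\binom{M_3}{M_1\ M_2}$ among the generated highest weight $\LW_c$-modules induces one of type $\binom{L[r\pm1]}{L[2]\ L[r]}$, once we verify the following. First, the submodules of $M_1$ and $M_2$ that must be killed (the singular vector $W(-1)v$ and the subsingular vector $s_{1,r}$) are already zero in the $\HVir$-realization; equivalently, the $\LW_c$-modules generated inside $\HVir$-modules are already irreducible, or at least have $L[2]$, $L[r]$ as quotients on which the operator factors. Second, the projection of the image onto $L[r\pm1]$ is nonzero.

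We expect the first verification to be the main obstacle. We would first try to show irreducibility directly, by checking that $W(-1)e^{\lambda}=0$ in the free field realization (since $W$ acts through the level-zero Heisenberg part, this should be immediate) and that $s_{1,r}$ vanishes by a character comparison. The character of $L[r]$ is known from the PBW description in Theorem \ref{W-struc} ($w_1=0$, $\ell_1<r$), and we would compare it with the character of the $\LW_c$-module generated by $e^{\lambda}$.

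If irreducibility fails, the fallback is to use Li's Theorem \ref{li}(2) with generalized Verma modules. First, construct intertwining operators of type $\binom{L[r\pm1]}{L[2]\ V}$, where $V$ is the generalized Verma module whose top level is the one-dimensional space of weight $\mathbf h[1,r]$; the Hom-space of Theorem \ref{thm:FZ-general} is then nonzero, since $(h_3-h_2)^2=\frac14$ holds for $r'=r\pm1$. Then argue that the operator annihilates the maximal submodule of $V$. This last point would be established by the $\HVir$ realization or by a weight argument using $C_1$-cofiniteness from Section 3.
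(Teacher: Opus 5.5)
Your main route, through $\LW_c\hookrightarrow\HVir$, is one the paper mentions only in a remark as a possible alternative. As written it leaves the substance undone, and the concrete method you offer for the hardest step cannot work.

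\textbf{Main route.} If $W(-1)e^{\lambda}=0$, then $\LW_c e^{\lambda}$ is a quotient of $V[1,r]/V[1,r-2]$. By Theorem~\ref{W-struc} it is therefore either $L[r]$ or the whole non-split extension. In the second case $L[r]$ is only a quotient, and there is no copy of $L[r]$ to restrict an $\HVir$-operator to. Which case occurs is exactly the question of whether $s_{1,r}e^{\lambda}=0$.
\begin{itemize}
\item A character comparison cannot settle this. It presupposes the character of $\LW_c e^{\lambda}$, which is the unknown quantity, since the ambient $\HVir$-module is in general larger than $L[r]$.
\item There is no closed formula for $s_{1,r}$ to compute with directly (Remark~\ref{cof}).
\end{itemize}
Step 2 is also incomplete. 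A Fock-type operator $\mathcal F_a\times\mathcal F_b\to\mathcal F_{a+b}$ has a single target, and you do not say what produces the $\mathbf h[1,r-1]$ piece. Moreover, exhibiting a highest weight vector of weight $\mathbf h[1,r\pm1]$ inside the image-generated module does not give an operator into $L[r\pm1]$. What you need is $L[r\pm1]$ as a quotient of that module.

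\textbf{Fallback.} Your fallback misapplies Theorem~\ref{li}. Part (2) requires $M_3'$ to be a generalized Verma module. For $M_3=L[r\pm1]$ this fails, so a nonzero Hom-space gives only the upper bound from part (1), not an intertwining operator.

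The paper instead takes $M_2=V[1,r]$ and $M_3=V[1,r']$ with $r'=r\pm1$. This gives an operator of type $\binom{V[1,r']'}{L[2]\ V[1,r]}$. The paper then supplies the two ingredients you leave open.
\begin{enumerate}
\item The image of $L[2]\otimes\overline V[1,r]$ does not reach the generator $v'[r']$ of $L[1,r']\subset V[1,r']'$.
\begin{itemize}
\item On $U(\W)W(-1)v[r]$: $W(-1)$ acts as a derivation and kills the generator $v$ of $L[2]$. Hence $v_{\beta}W(-1)v[r]=W(-1)\bigl(v_{\beta}v[r]\bigr)$. This cannot reach $\C v'[r']$, because $W(-1)$ raises weight and $V[1,r']'$ has nothing below $v'[r']$.
\item On the part generated by $s_{1,r}$: this part gives the subquotient $L[1,-r]$. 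Theorem~\ref{thm:FZ-general} rules out nonzero operators of type $\binom{L[r']}{L[2]\ L[1,-r]}$, since $r'\notin\{-r\pm1\}$.
\end{itemize}
\item Miyamoto's theorem says the image of $C_1$-cofinite modules under an intertwining operator is $C_1$-cofinite. Applied to the resulting operator on $L[2]\otimes L[r]$, it forces the image into the $C_1$-cofinite part $L[1,r']$ of $V[1,r']'$.
\end{enumerate}
Your suggested ``weight argument using $C_1$-cofiniteness'' cannot replace the first step. $V[1,r]$ is not $C_1$-cofinite, so Miyamoto's theorem becomes available only after $\overline V[1,r]$ has been killed.
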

 \begin{proof}
 We shall apply Theorem \ref{li} (2) in the cases
 \[M_1 = L[1,2],  \ M_2 = V[1,r],   \  M_3 = V[1, r\pm 1].\]
Then $A(M_1) \otimes_{A(V)} M_2 (0) \cong \C_{h[1, r-1], h_W[1, r-1]}  \oplus \C_{h[1, r+1], h_W[1, r+1]}$, and we do have non-trivial intertwining operators
\[\binom{M_3 '}{M_1  \ M_2}.\]
We denote these intertwining operators with $\mathcal {Y}_{ r \pm 1} (\cdot, z)$.
 
Recall that the maximal submodule  $\overline V[1,r]$ of $V[1,r]$ is generated by singular vector  $W(-1)v$ and subsingular vector $s_{1,r}$. So we have exact sequence 
\[0 \rightarrow\overline V[1,r] \rightarrow V[1,r] \rightarrow L[1,r] \rightarrow 0.\]
Applying the dual functor for $r' = r \pm 1$,  we get exact sequence
\[0 \rightarrow L[1,  r'] \rightarrow V[1,r'] ' \rightarrow\overline V[1,r'] ' \rightarrow 0.\]

We shall now prove that
\[(*) \quad  \mathcal Y_{ r \pm 1} (u, z)\overline  V[1, r] \in \left(V[1, r'] '  \setminus  V[1, r'] ' _{top}\right)\{z\}.\]
(Here we only need that  $L[1,2] \boxtimes\overline V[1,r]$ cannot contain any module whose top component is isomorphic to $\C_ {h[1, r'], h_W[1, r']}$.)

Denote the highest weight vector of $V[1,r]$ by $v[r]$, highest weight vector of $L[1,2]$ by $v$, and by $v'[r']$ the vector in $V[1, r'] ' $  which generates $L[1, r']$.

By construction above there is $\a_0 \in \C$ such that 
\[v_{\a_0} v[r] =  v'[r'],\]
and $v_{\a} v[r] =0$ for $\a >\a_0$.

Consider now the restriction of  $ Y_{ r \pm 1} (\cdot , z)  $ to the submodule generated by $W(-1) v[r]$. Assume that $v'[r']$ is in the image of this restriction. This implies that there exist $\beta_0, \nu  \in \C$ such that
\[v_{\beta_0} W(-1) v[r] =\nu v'[r'].\]
By a weight argument, we conclude that $\beta_0 = \a_0+1$.

Next we notice that $W(-1)$ acts as a derivation which implies
\[  0= W(-1) (v_{\beta_0} v[r]) = (\underbrace{W(-1) v}_{=0} ) _{\beta_0} v[r] + v_{\beta_0} (W(-1) v[r]), \]
hence $v_{\beta_0} W(-1) v[r] = \nu v'[r'] =0$ so $\nu =0$. Therefore $v'[r']$ is not contained in  $L[1,2] \cdot \left(\LW_c \cdot (W(-1) v[r])\right)$.

Next we consider subsingular vector $s_{1,r}$. Recall that it generates a sub-quotient isomorphic to $ L[1, -r] $.

Assume that
$v'[r']$ is contained in  $L[1,2] \cdot (\LW_c . s_{1,r})$. This would imply that we have a non-zero intertwining operator in
$$   I \binom{ L[r'] }{L[2] \ L[-r] }.    $$
But applying Theorem \ref{thm:FZ-general}(2) we conclude that 
 $I\binom{L[r_3]}{L[2]\ \ L[-r]}$ can be non-zero only if $r_ 3 \in\{- r-1, -r+1\}$.
 Since $$r' = r \pm 1 \notin  \{- r-1, -r+1\}, $$ we get  a contradiction. Therefore (*) holds.

This shows that we do have well-defined, non-zero quotient intertwining operators  $\overline { \mathcal Y}_{\pm}$ of types  $$\binom{Z[r'] }{L[1,2]\ \ L[1,r] }, $$
where $Z[r']$ is a quotient of $V[1,r']' $ (possibly equal to $V[1,r']' $ ) which contains submodule isomorphic to $L[1,r']$.

Next we recall that $L[1,2]$ and $L[1,r]$ are $C_1$-cofinite. By applying main theorem of Miyamoto from \cite{Miy} we conclude that image of $\overline { \mathcal Y}_{\pm}$  should be also $C_1$-cofinite. Since $L[1,r'] $ is the $C_1$-cofinite part of module $V[1,r']'$ (see (\ref{max-sub})) we conclude that we do have non-trivial intertwining operators of types
\[\binom{L[r \pm 1]}{L[2]\ \ L[r] }.    \qedhere\]
\end{proof} 
 
\begin{remark}
The existence of the intertwining operators of the type
\[\binom{L[1,r \pm 1]}{L[1,2]\ \ L[1,r] }\] can also be proved by using intertwining operators for the Heisenberg-Virasoro vertex algebra $\LH_{c,c_{L,I}}$ constructed in \cite{AR1} and the embedding of the $\LW_c \hookrightarrow \LH_{c,c_{L,I}}$ from \cite{AR2}.
 \end{remark}

\subsection{The fusion rules} 
  
Recall that  $\cC$ denotes the category of grading-restricted generalized $V$-modules that are $C_1$-cofinite.

From Theorem  \ref{thm:FZ-general}  and Proposition \ref{prop:construction} we get:

\begin{proposition}\label{thm:fusion-with-fundamental}
For every $r\in\N$ we have the following fusion rules in the category $\cC$:
\begin{align}\label{fusion-2xr}
&L[2]\boxtimes L[1]\cong L[2],\\
&L[2]\boxtimes L[r]\cong L[r-1]\oplus L[r+1],\qquad r>1.
\end{align}
\end{proposition}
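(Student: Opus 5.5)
The plan is to pin down $L[2]\boxtimes L[r]$ as an object of $\cC$ through its simple quotients, and then to prove semisimplicity by ruling out nontrivial gluing. Since $\cC$ is a braided tensor category (Theorem \ref{thm:Huang}), $L[2]\boxtimes L[r]$ is an object of $\cC$. By Theorem \ref{C1fl} it has finite length with composition factors of the form $L[s]$. Write $T=L[2]\boxtimes L[r]$. The universal property gives
\[
\operatorname{Hom}_{\cC}(T,L[s])\cong I\binom{L[s]}{L[2]\ \ L[r]}.
\]
Theorem \ref{thm:FZ-general}, applied with $p=p'=1$, shows this space has dimension at most one, and is zero unless $s=r\pm1$. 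Proposition \ref{prop:construction} shows that for $r>1$ both spaces with $s=r\pm1$ are one-dimensional. For $r=1$, $L[1]=\LW_c$ is the unit object, so $L[2]\boxtimes L[1]\cong L[2]$ directly. Hence for $r>1$ the top (head) of $T$ is exactly $L[r-1]\oplus L[r+1]$, each with multiplicity one.

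The next step is to show that $T$ has no further composition factors, i.e.\ the radical of $T$ is zero. My approach is to bound the "lowest weight content" of $T$ by computing $A(L[2])\otimes_{A(V)}L[r](0)$. By Proposition \ref{prop:bimodule-presentation}, this space is a quotient of $\C[x_1,y_1]/(y_1,(x_1-h[1,r])^2-\tfrac14)$, which has dimension at most $2$ and is supported on the conformal weights $h[1,r\pm1]$. I then argue that every composition factor $L[s]$ of $T$ has its top weight among these two values. To do this, I take a top-most occurring weight in a nonsplit piece, form a quotient of $T$ in which the corresponding $L[s]$ appears as a submodule generated by lowest-weight vectors, and obtain a nonzero intertwining operator into a module whose lowest weight space is that top space. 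Li's injectivity, Theorem \ref{li}(1), is stated only for an irreducible target, so I instead use the generalized Verma version, Theorem \ref{li}(2), with the contragredient target $V[1,s]'$, as was done in Proposition \ref{prop:construction}. The weights must then satisfy $s=r\pm1$, and the total multiplicity is bounded by $2$. Together with the head computed above, this forces $T\cong L[r-1]\oplus L[r+1]$.

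An alternative for ruling out self-extensions or extensions between $L[r-1]$ and $L[r+1]$ is a weight argument. The conformal weights $h[1,r\pm1]$ differ by $1$, and $W(0)$ acts by $0$ on both modules. One then needs $\operatorname{Ext}^1_{\cC}(L[r+1],L[r-1])=0$ and the analogous vanishing in the other direction. I would try to deduce this from the structure of Verma modules in Theorem \ref{W-struc}: a nonsplit extension with head $L[r+1]$ would be a quotient of $V[1,r+1]$, but the composition factors of $V[1,r+1]$ below the top are $L[1,-r-1]$ and modules $L[1,r+1-2k]$ with weights differing by even amounts, so $L[r-1]$ does not appear at the right place unless it is generated via the singular vector, which is killed by $W(-1)$ in $\cC$. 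Self-extensions of $L[r\pm1]$ are excluded because $L(0)$ acts semisimply on $T$'s top by the Zhu computation.

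The main obstacle is the second step: controlling composition factors that lie strictly below the top, since Zhu/Frenkel--Zhu theory only sees lowest-weight spaces. If the bimodule argument proves insufficient, I would fall back on the embedding into the Heisenberg--Virasoro algebra $\mathcal{H}$ and its known fusion rules to bound the size (e.g.\ the character) of $T$ from above.
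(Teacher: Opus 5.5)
Your first paragraph is exactly the paper's proof. For $r=1$ the paper uses that $L[1]=\LW_c$ is the unit (noting that $L[0]\notin\cC$). For $r>1$ it combines the bound of Theorem~\ref{thm:FZ-general} with the existence result of Proposition~\ref{prop:construction}, and stops there. You are right that this only computes $\operatorname{Hom}_{\cC}(T,L[s])$, i.e.\ the head of $T=L[2]\boxtimes L[r]$; the paper does not address the radical.

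The second step you add for this purpose does not go through as written:
\begin{enumerate}
\item Theorem~\ref{li}(2) requires $M_2$ to be a generalized Verma module, which $L[r]$ is not. Also, the targets you construct are quotients of $T$, not $V[1,s]'$.
\item An injectivity statement bounds the space of intertwining operators into $Q$. What you need is a bound on the module $Q$ itself.
\item Most seriously, Frenkel--Zhu theory only sees the bottom weight space of the target. A composition factor $L[s]$ lying above the bottom of $T$ is never reached by ``taking a top-most weight'', so the claim that every factor has lowest weight $h[1,r\pm1]$ is exactly the unproved point.
\item Your $\operatorname{Ext}^1$ alternative presupposes that the factors are already known to be $L[r\pm1]$, each once. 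Semisimplicity of $L(0)$ on the bottom of $T$ says nothing about factors sitting higher.
\end{enumerate}

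The missing ingredient is the non-injective half of Theorem~\ref{li}, used to peel $T$ from the bottom, together with Proposition~\ref{prop:classification}. Let $Q$ be any nonzero quotient of $T$, and let $\mathcal Y_Q$ be the induced intertwining operator of type $\binom{Q}{L[2]\ L[r]}$. Its coefficients span $Q$: they generate $Q$, and their span is stable under all modes by the commutator formula. Since $L[r]=U(\W^-)v_{1,r}$ and creation modes cannot land in a bottom weight space, the commutator formula shows that the bottom weight space of $Q$ (in each $\Z$-coset) is spanned by bottom components of $\mathcal Y_Q(w_1,x)v_{1,r}$. In other words, it is the image of the map $\pi$ of Theorem~\ref{li}, whose construction does not need $Q$ irreducible.

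By Proposition~\ref{prop:bimodule-presentation}, the source $A(L[2])\otimes_{A(V)}\C v_{1,r}$ is a quotient of $\C[x_1,y_1]/\bigl(y_1,(x_1-h[1,r])^2-\tfrac14\bigr)$. Hence $Q$ lives in $-\tfrac r2+\Z$, and its bottom space is at most one-dimensional, of weight $-\tfrac r2=h[1,r+1]$ or $1-\tfrac r2=h[1,r-1]$. Now peel:
\begin{itemize}
\item For $Q=T$, which maps onto $L[r+1]$, this gives $T_{[-r/2]}=\C t$ with $t$ a highest weight vector of weight $\mathbf h[1,r+1]$. The submodule $K$ it generates lies in $\cC$, so $K\cong L[r+1]$ by Proposition~\ref{prop:classification}.
\item For $Q=T/K$, which is nonzero because the surjection $T\to L[r-1]$ kills $K$, the bottom is one-dimensional of weight $1-\tfrac r2$ and generates $K'\cong L[r-1]$.
\item Finally, $(T/K)/K'$ has no weights $\le 1-\tfrac r2$, so it is zero.
\end{itemize}
Thus $0\to L[r+1]\to T\to L[r-1]\to0$ is exact. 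It splits because the nonzero map $T\to L[r+1]$ cannot vanish on $K$ (it would then factor through $L[r-1]$). This yields $T\cong L[r-1]\oplus L[r+1]$ with no need for the $\HVir$ fallback.
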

\begin{proof}
The existence of (\ref{fusion-2xr}) is clear since $L[1]=\LW_c$. Also note that the non-trivial operator $\binom{L[0]}{L[2]\ \ L[1]}$ cannot exist since $L[0]$ is not $C_1$-cofinite. The rest follows from Proposition \ref{prop:construction} and Theorem \ref{thm:FZ-general}.
\end{proof}

Now we can state the main result about fusion among semi-simple $C_1$-cofinite highest weight modules.

\begin{theorem}\label{thm:main-fusion-ring}
We have the following fusion rules in $\cC$: \[
L[r]\boxtimes L[s]
\cong
\bigoplus_{j=0}^{\min(r-1,s-1)}L[r+s-1-2j].
\]
\end{theorem}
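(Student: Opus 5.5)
Fix $s$ and argue by induction on $r$; by the symmetry of the braiding we may also assume $r\le s$ whenever convenient. The cases $r=1$ (since $L[1]=\LW_c$ is the unit) and $r=2$ (Proposition \ref{thm:fusion-with-fundamental}) already give the stated formula. For the induction step we use the associativity of $\boxtimes$ from Theorem \ref{thm:Huang}, together with its additivity, i.e.\ that $\boxtimes$ commutes with finite direct sums, which holds because $\boxtimes$ is defined by a universal property. Applying Proposition \ref{thm:fusion-with-fundamental} to $L[r]$ gives, for $r\ge 2$,
\[
L[2]\boxtimes(L[r]\boxtimes L[s])\cong (L[2]\boxtimes L[r])\boxtimes L[s]\cong (L[r-1]\boxtimes L[s])\oplus (L[r+1]\boxtimes L[s]).
\]

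For the left-hand side, the induction hypothesis expresses $L[r]\boxtimes L[s]$ as a direct sum of simples $L[t]$. Applying Proposition \ref{thm:fusion-with-fundamental} termwise then shows that the left-hand side is semisimple and equals the Clebsch--Gordan sum $[2]\cdot[r]\cdot[s]$, computed in the representation ring of $\mathfrak{sl}_2$, where $[n]$ denotes the $n$-dimensional irreducible. The induction hypothesis also gives $L[r-1]\boxtimes L[s]$ explicitly.

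The right-hand side contains $L[r+1]\boxtimes L[s]$ as a direct summand, so this module is a direct summand of a semisimple module of finite length. Hence it is itself semisimple, and its composition factors, counted with multiplicity, are obtained by subtraction. Since $[2][r]=[r-1]+[r+1]$ in the representation ring, this subtraction produces exactly $[r+1]\cdot[s]$. This is the stated formula for $r+1$.

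The key step is the cancellation: from $A\oplus X\cong A\oplus Y$ with everything of finite length we need $X\cong Y$. This follows from Krull--Schmidt, which holds in $\cC$ because its objects have finite length by Theorem \ref{C1fl}. Alternatively, one can observe that once $X$ is known to be a summand of a semisimple module it is semisimple, so it is determined by its composition factors, and the Jordan--H\"older multiplicities cancel. The only point that needs care is that $\boxtimes$ is additive and associative in $\cC$, and this is supplied by Theorem \ref{thm:Huang}.
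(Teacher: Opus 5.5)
Your proof is correct and follows the same route as the paper. Both argue by induction on $r$, using associativity of $\boxtimes$ and the rule $L[2]\boxtimes L[r]\cong L[r-1]\oplus L[r+1]$ to get $L[r+1]\boxtimes L[s]=L[2]\boxtimes(L[r]\boxtimes L[s])-L[r-1]\boxtimes L[s]$; you also justify this formal subtraction, via additivity of $\boxtimes$ and cancellation by Krull--Schmidt or semisimplicity of direct summands, which the paper leaves implicit.
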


\begin{proof}
The fusion rule in Proposition \ref{thm:fusion-with-fundamental} 
shows that $L[2]$ generates the semi-simple tensor subcategory $\cC^{s}$ of $\cC$ 
which contains all $L[r], \ r \in \N$.
The associativity of tensor product now easily implies the fusion rules
\begin{align*}
L[r]\boxtimes L[s]&=L[2]\boxtimes\left(L[r-1]\boxtimes L[s]\right)-L[r-2]\boxtimes L[s]\\
&
\cong\bigoplus_{j=0}^{\min(r-1,s-1)}L[r+s-1-2j]
\end{align*}
by induction.
\end{proof}

 \subsection{Rigidity}
 \begin{theorem}
 The tensor category $\cC$ is rigid.
 \end{theorem}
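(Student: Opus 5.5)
The plan is to apply the Etingof--Penneys criterion \cite{EP}. In a braided (or multi-)tensor category that is locally finite, with every object of finite length, the criterion says the whole category is rigid as soon as every \emph{simple} object is rigid. When the unit is simple, it is even enough that a generating family of simple objects is rigid. By Theorem \ref{C1fl} and Theorem \ref{thm:Huang}, $\cC$ is a braided tensor category. Every object has finite length, with simple composition factors among the $L[r]$, $r\in\N$. The unit $L[1]=\LW_c$ is simple. So it suffices to show that each $L[r]$ is rigid.

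I would first reduce to the single object $L[2]$. By Proposition \ref{thm:fusion-with-fundamental} and Theorem \ref{thm:main-fusion-ring}, $L[2]^{\boxtimes (r-1)}$ contains $L[r]$ as a direct summand, and the relevant subcategory is semisimple. Tensor products of rigid objects are rigid, and direct summands of rigid objects are rigid: duals restrict along the split inclusion and projection. Hence rigidity of $L[2]$ gives rigidity of every $L[r]$, and then of all of $\cC$ by Etingof--Penneys.

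To show $L[2]$ is rigid, I would take $L[2]$ as its own candidate dual. This is consistent with contragredients: $L[r]'\cong L[r]$, since the top weights $(\frac{1-r}{2},0)$ are real and the simple module is determined by its top weight. By Proposition \ref{thm:fusion-with-fundamental}, $L[2]\boxtimes L[2]\cong L[1]\oplus L[3]$. This yields nonzero maps $i\colon L[1]\to L[2]\boxtimes L[2]$ (coevaluation) and $e\colon L[2]\boxtimes L[2]\to L[1]$ (evaluation), each unique up to scalar. Consider the composite
\[
L[2]\to (L[2]\boxtimes L[2])\boxtimes L[2]\to L[2]\boxtimes(L[2]\boxtimes L[2])\to L[2]
\]
built from $i$, the associator, and $e$. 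It is an endomorphism of the simple object $L[2]$, hence a scalar. Rigidity of $L[2]$ amounts to this scalar being nonzero; once it is, rescaling $e$ makes both zigzag identities hold, since the second composite is also a scalar and the two agree after the same normalization.

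The main obstacle is proving that this zigzag scalar is nonzero. My first attempt would be a Perron--Frobenius/dimension-style argument. If the composite were zero, $L[2]$ would be a non-rigid object whose self-fusion contains the unit with multiplicity one. In a semisimple braided category with $\mathfrak{sl}_2$ fusion rules, I would try to show this forces a contradiction with the nondegeneracy of the evaluation pairing coming from the invariant bilinear form on $L[2]$. Failing that, I would compute the scalar directly through intertwining operators. Concretely, I would express $e\circ(\mathrm{id}\boxtimes i)$-type composites via products and iterates of the intertwining operators constructed in Proposition \ref{prop:construction}. The scalar then appears as a coefficient in the expansion of a four-point function $\langle v', \mathcal Y(v,z_1)\mathcal Y(v,z_2)v\rangle$, which satisfies a BPZ-type second-order ODE. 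That ODE comes from the singular vector relations $W(-1)v=0$ and $(L(-1)^2+\frac{6}{c}W(-2))v=0$ in \eqref{eq:singrels}. Solving it explicitly (I expect hypergeometric-type solutions with exponents read off from $h[1,1]$ and $h[1,3]$) would let me compute the fusing matrix entry, which equals the zigzag scalar up to nonzero normalizations, and check that it does not vanish.
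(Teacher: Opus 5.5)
Your reductions are fine. $L[r]$ is a direct summand of $L[2]^{\boxtimes(r-1)}$, so rigidity of $L[2]$ propagates to every $L[r]$. Taking $L[2]$ as its own dual, with $i,e$ coming from $L[2]\boxtimes L[2]\cong L[1]\oplus L[3]$, is the right setup. If one zigzag is the identity, the other is a nonzero idempotent of the simple object $L[2]$, hence the identity, so one normalization suffices.

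The statement you attribute to Etingof--Penneys, ``rigid simple objects imply $\cC$ rigid'', is not their theorem. It is \cite[Theorem~4.4.1]{CMY2}, which the paper uses for that step.

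The genuine gap is that you never prove the zigzag scalar for $L[2]$ is nonzero. After the reductions, that is the whole content of the theorem.
\begin{itemize}
\item Route (a) is not an argument. Non-negligibility and the nondegenerate invariant form on $L[2]$ only give nonzero $i$ and $e$. The zigzag also involves the associativity isomorphism (analytic continuation of products of intertwining operators to iterates), and nothing in your sketch controls it. A dimension-type argument of the kind you envisage is exactly what \cite{EP} make rigorous, and it needs a moderate-growth hypothesis you never invoke.
\item Route (b) is viable, but it is only announced, not carried out.
\end{itemize}

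The missing idea is the actual Etingof--Penneys theorem \cite[Theorem~1.1]{EP}: in a braided tensor category, a simple object that is non-negligible and of moderate growth is rigid. Both hypotheses follow immediately from Theorem \ref{thm:main-fusion-ring}.
\begin{itemize}
\item Non-negligible: $L[1]$ is a direct summand of $L[r]\boxtimes L[r]$.
\item Moderate growth: every $L[r]^{\boxtimes n}$ is a direct sum of simples with $\mathfrak{sl}_2$ multiplicities, so $\dim\operatorname{End}(L[r]^{\boxtimes n})\le r^{2n}<n!$ for $n\gg 0$.
\end{itemize}
This, followed by \cite[Theorem~4.4.1]{CMY2}, is the paper's proof.

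Alternatively, route (b) can be completed, and the paper does so in Proposition \ref{dim}. Since $W(j)v=0$ for all $j\ge -1$, the $W$-modes pass through the intertwining operators and are killed at both ends. So the $W(-2)v$ term contributes nothing, and the relation $(L(-1)^2+\frac{6}{c}W(-2))v=0$ gives simply $\phi''(x)=0$, not a hypergeometric equation. With solutions $1$ and $x$, the $L_0$-conjugation computation gives $b=c_1=-\frac12$. Hence $\mathfrak{R}=-1-b=-\frac12\neq 0$, and $L[2]$ is rigid.
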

 \begin{proof}
 By applying  \cite[Theorem~1.1]{EP}, in order 
to prove the rigidity of $\mathcal{C}$ it is enough to verify that, for every $r \in \N$, the simple object $L[r]$ is non-negligible and of moderate growth.

Recall that an indecomposable object $X$ is said to be \emph{non-negligible} if there exists an object $Y$ such that ${\bf 1}$ is a direct summand of $Y \otimes X$. It is said to be of \emph{moderate growth} if there exists $n \in \N$ such that
\[
\dim \operatorname{End}(X^{\otimes n}) < n!.
\]

Now from the fusion rule
\[
L[r] \boxtimes L[r]
=
L[2r-1] \oplus L[2r-3] \oplus \cdots \oplus L[1],
\]
it follows immediately that the tensor unit ${\bf 1}=L[1]$ occurs as a direct summand of $L[r] \boxtimes L[r]$. Hence $L[r]$ is non-negligible for every $r \in \mathbb{Z}_{>0}$.

On the other hand, the fusion rules among the objects $L[r]$ agree with those in the category of finite-dimensional $\mathfrak{sl}_2$-modules. Since the latter category is known to be of moderate growth (see, for example, \cite{EP}), it follows that each $L[r]$ is of moderate growth.

Therefore, all simple objects of $\mathcal{C}$ are non-negligible and of moderate growth. By \cite[Theorem~4.4.1]{CMY2}, which asserts that a category of finite-length modules is rigid as long as all of its simple objects are rigid, we conclude that $\mathcal{C}$ is rigid.
\end{proof}

\begin{remark}
    Rigidity of the generating object $L[2]$ can also be proved by deriving differential equations satisfied by the matrix elements of products and iterates of intertwining operators (see for example the proof of \cite[Theorem~4.1]{MY1}, cf. Proposition \ref{dim} below).
\end{remark}

\section{Correspondence to Rep $\mathfrak{sl}_2$}
Now we have shown that the semisimple rigid subcategory $\cC^s$ consisting of direct sums of the simple objects $L[r]$ for $r \in \Z_{>0}$ obeys the $\mathfrak{sl}_2$-fusion rule. In this section, we will show it is indeed tensor equivalent to the category of finite dimensional $\mathfrak{sl}_2$-modules via quantum group tensor categories.

For $q \in \mathbb{C}^{\times}$, let $\cC(q, \mathfrak{sl}_2)$ be the category of finite-dimensional weight modules for Lusztig's modified form of the Drinfeld-Jimbo quantum group $U_q(\mathfrak{sl}_2)$ specialized at $q$. When $q = \pm 1$ or $q$ is not a root of unity, $\cC(q, \mathfrak{sl}_2)$ is semisimple with the same fusion ring as Rep $\mathfrak{sl}_2$ \cite{L}. Moreover, the tensor category $\cC(q, \mathfrak{sl}_2)$ is generated by the $2$-dimensional standard representation $L_q(1)$, in the sense that every simple object of $\cC(q, \mathfrak{sl}_2)$ is a subquotient of a tensor power of $L_q(1)$.

Conversely, Kazhdan and Wenzl proved the following reconstruction theorem (\cite[Theorem~$A_{\infty}$]{KW}): If $\cC$ is a rigid semisimple tensor category with a simple generating object $X$, and the fusion ring of $\cC$ agrees with that of Rep $\mathfrak{sl}_2$ under the identification of $X$ with the vector representation of $\mathfrak{sl}_2$, then there exists $q_{\cC}\in\C$, either $q_{\cC} = \pm 1$ or $q_{\cC}$ is not a root of unity, and with $q_\cC^2$ uniquely determined up to inversion, such that $\cC$ is tensor equivalent to $\cC(q_{\cC}, \mathfrak{sl}_2)^\tau$ under a functor that sends $X$ to $L_{q_{\cC}}(1)$. Here $\tau$ denotes modification of the associativity isomorphisms in $\cC(q_{\cC}, \mathfrak{sl}_2)$ by a $3$-cocycle on $\Z/2\Z$. There is only one non-trivial $3$-cocycle $\tau$ on $\Z/2\Z$ up to coboundaries: it changes the usual associativity isomorphism $L_{q_\cC}(1)\otimes (L_{q_\cC}(1)\otimes L_{q_\cC}(1)) \rightarrow (L_{q_\cC}(1)\otimes L_{q_\cC}(1))\otimes L_{q_\cC}(1)$ in $\cC(q_{\cC}, \mathfrak{sl}_2)$ by a sign. Moreover, by \cite[Theorem~$A_{\infty}$]{KW} again, $\cC(q_{\cC}, \mathfrak{sl}_2)^{\tau} \cong \cC(-q_{\cC}, \mathfrak{sl}_2)$ as tensor categories.

To determine $q_{\cC}$, we use the intrinsic dimension of $X$: $$d(X) = e_X \circ i_X \in \mathbb{C},$$ where $e_X$ is the evaluation and $i_X$ is the coevaluation such that the rigidity zigzag morphism equals the identity. It is an invariant of the tensor category structure on $\cC$. So because the intrinsic dimension of $L_{q_{\cC}}(1)$ in $\cC(q_{\cC}, \mathfrak{sl}_2)$ is $-q_{\cC} - q_{\cC}^{-1}$ (see \cite[Exercise 8.18.8]{EGNO}), by comparing with $d(X)$ in $\cC$ we can solve for $q_{\cC}$. 

Now we apply this spirit to the category $\cC^s$: it is a rigid semisimple tensor category generated by the simple object $L[2]$ with the same fusion rule as Rep $\mathfrak{sl}_2$ (Theorem \ref{thm:main-fusion-ring}), by \cite[Theorem~$A_{\infty}$]{KW}, $\cC^s \cong \cC(q_{\cC}, \mathfrak{sl}_2)$ or $\cC(-q_{\cC}, \mathfrak{sl}_2)$ as tensor categories under a functor that sends $L[2]$ to $L_{q_\cC}(1)$ or $L_{-q_\cC}(1)$. For simplicity, we use $q_\cC$ to denote both $\pm q_{\cC}$, and we will determine $q_{\cC}$ in the rest of this section. For this, we need the intrinsic dimension of $L[2]$:
\begin{proposition}\label{dim}
  The intrinsic dimension of $L[2]$ in the category $\cC^s$ is
  $$d(L[2]) = -2.$$
\end{proposition}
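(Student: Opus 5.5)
We plan to follow the standard method of \cite{MY1} (and its predecessors for the Virasoro algebra): compute $d(L[2])$ from the monodromy/associativity data of intertwining operators among $L[2]$, using the explicit degree-two null vector $s_{1,2}=\bigl(L(-1)^2+\frac{6}{c}W(-2)\bigr)v$ from (\ref{eq:singrels}). By Proposition \ref{thm:fusion-with-fundamental}, $L[2]\boxtimes L[2]\cong L[1]\oplus L[3]$. This means we have a nonzero intertwining operator $\mathcal Y_1$ of type $\binom{L[1]}{L[2]\ L[2]}$, which gives the evaluation $e$. We also have operators $\mathcal Y_{3}$ of type $\binom{L[3]}{L[2]\ L[2]}$, and the coevaluation is obtained dually. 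The intrinsic dimension is then determined by one scalar: the coefficient expressing the composite $e\circ i$. Equivalently, up to the normalization fixed by the zigzag identity, it is the ratio of the $L[1]$-components in the associativity relation
\[
\langle v', \mathcal Y(v,z_1)\mathcal Y(v,z_2)v\rangle \;\text{versus}\; \langle v',\mathcal Y(\mathcal Y(v,z_1-z_2)v,z_2)v\rangle .
\]
So the whole problem reduces to computing the fusing matrix entry $F_{11}$ for four copies of $L[2]$ and setting $d(L[2])=1/F_{11}$ (up to the standard sign conventions of \cite{MY1}).

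The first step is to derive an ordinary differential equation for the four-point function
\[
\phi(z_1,z_2)=\langle v', \mathcal Y(v,z_1)\mathcal Y(v,z_2)v\rangle,
\]
with all four insertions equal to the top vector $v$ of $L[2]$ and $v'$ in the top of $L[2]'\cong L[2]$. Since $W(-1)v=0$, the $W(-n)$-modes act on $v$ by operators that are easily commuted through: $W(0)$ acts on every top vector by $h_W[1,2]=0$, and $W(-1)$ kills all insertions. Hence $\frac{6}{c}W(-2)$ inserted at one point becomes, after moving it by the $W$-Ward identity, a first-order multiplication operator. Its coefficients only involve $W(0)$-eigenvalues and the $C_W=c$ central term; these all vanish except contributions proportional to the conformal weights through $[L,W]$. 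The outcome is a second-order Fuchsian ODE in the cross-ratio $x$, namely $\partial_x^2\phi+(\cdots)\phi=0$. Its local exponents at $x=0$ must match conformal weights $h[1,1]-2h[1,2]=1$ and $h[1,3]-2h[1,2]=1$ up to shifts. We expect the ODE to be $\phi''=0$ after a gauge factor, or a hypergeometric equation with parameters giving integer exponent differences.

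Solving the ODE and comparing the bases at $x=0$ and $x=1$ gives the connection matrix, whose $(1,1)$ entry yields the intrinsic dimension. We would check that the answer is $-2$ by noting that the solutions are polynomial ($1$ and $x$). This is consistent with the $\mathfrak{sl}_2$ picture at $q=1$, where $-q-q^{-1}=-2$. We would cross-check this against the free-field realization via $\LH$ from \cite{AR1,AR2}, where $L[2]$ sits inside a Heisenberg Fock module and the correlator is a product of powers of $(z_i-z_j)$ with integer exponents.

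The main obstacle is the derivation of the ODE. Specifically, we must carefully handle the $W(-2)$ term in the null vector: unlike the Virasoro case, it is not a differential operator in $z$. Its Ward identity produces sums of $W(0)$ actions at the other points, and we must verify that these reduce to scalars on the top levels. We must also fix the normalization of $e$ and $i$ consistently, since an overall sign determines $-2$ versus $2$. For the sign, we would follow the convention in \cite[Theorem~4.1]{MY1} verbatim.
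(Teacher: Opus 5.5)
\textbf{Verdict.} Your overall plan is the paper's, namely the argument of \cite[Theorem~4.1]{MY1}. There is, however, a genuine gap: you propose to read $d(L[2])=-2$ off a connection matrix built from the solutions $1,x$, but the ODE does not determine the $L[3]$-channel block. An extra representation-theoretic input is needed.

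\textbf{Where the ODE falls short.} The ODE step itself is fine, and simpler than you suggest. Since $W(-1)v=W(0)v=0$ and $v$ is highest weight, the commutator formula shows that every $W(n)$ commutes with $\mathcal{Y}(v,z)$. So the $W(-2)$ term contributes nothing: there is no $[L,W]$ piece and no gauge factor, and one gets exactly $\phi''(x)=0$. The problem is the next step. The local exponents at $x=0$ are $h[1,1]-2h[1,2]=1$ and $h[1,3]-2h[1,2]=0$; you wrote $1$ for both. They differ by an integer, which has two consequences:
\begin{itemize}
\item The $L[1]$-channel product is forced to be $x$, and the $L[1]$-channel iterate is forced to be $1-x$.
\item The $L[3]$-channel product $\langle v,\mathcal{Y}^2_{23}(v,1)\mathcal{Y}^3_{22}(v,x)v\rangle$ is only known to be some $a+bx$.
\end{itemize}
Associativity, $\mathfrak{R}\,x+a+bx=1-x$, then gives $a=1$ and $\mathfrak{R}+b=-1$. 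That is one equation for two unknowns, and $F_{11}=\mathfrak{R}$ depends on $b$. So there is no well-defined ``connection matrix'' to read off from the polynomial solutions. If you use the obvious basis $\{1,x\}$ you implicitly set $b=0$, which gives $\mathfrak{R}=-1$ and $d(L[2])=-1$, not $-2$.

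\textbf{The missing ingredient.} You need an independent computation of $b$, the coefficient of $x^1$ in the $L[3]$-channel product. It comes from the structure of $L[3]$, not from the ODE. The paper obtains it by $L_0$-conjugation:
\begin{enumerate}
\item Write $b=\langle v,\mathcal{Y}^2_{23}(v,1)\pi_1(\mathcal{Y}^3_{22}(v,1)v)\rangle$. The weight-$(h[1,3]+1)$ subspace of $L[3]$ is spanned by $L_{-1}v_3$ alone, because $W(-1)v_3=0$.
\item The invariant form has $\langle L_{-1}v_3,L_{-1}v_3\rangle=2h[1,3]=-2$.
\item By the commutator formula, $\langle L_{-1}v_3,\mathcal{Y}^3_{22}(v,1)v\rangle=h[1,3]=-1$. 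Together with step 2 this gives $\pi_1(\mathcal{Y}^3_{22}(v,1)v)=\frac12 L_{-1}v_3$.
\item Move $L_{-1}$ across $\mathcal{Y}^2_{23}$, using $L_1v=0$ and the $L_{-1}$-derivative property. This gives $b=\frac12 h[1,3]=-\frac12$.
\end{enumerate}
Only then does one get $\mathfrak{R}=-\frac12$. The evaluation must be rescaled to $-2p_1$, and $d(L[2])=-2$.

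The free-field cross-check you mention could in principle supply $b$. That would require explicitly isolating the $L[3]$-channel block in the free-field correlator, which your proposal does not address.
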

\begin{proof}
The proof is similar to the proof of \cite[Theorem~4.1]{MY1}. From the fusion product decomposition $$L[2]\boxtimes L[2] \cong L[1] \oplus L[3],$$ we have for $s=1,3$, the maps $i_s:L[s]\rightarrow L[2]\boxtimes L[2]$ and $p_s:L[2]\boxtimes L[2]\rightarrow L[s]$ such that
\begin{equation}\label{eqn:c25_coev_ev}
 p_s\circ i_{s'}=\delta_{s,s'}{\rm Id}_{L[s]}\quad\text{and}\quad i_1\circ p_1+i_3\circ p_3={\rm Id}_{L[2]\boxtimes L[2]}.
\end{equation}
We take $i_1$ and $p_1$ as preliminary candidates for coevaluation and evaluation, respectively. We will calculate the rigidity zigzag morphism
\begin{align*}
 L[2]\xrightarrow{r^{-1}}L[2]\boxtimes L[1]&\xrightarrow{{\rm Id}\boxtimes i_1}L[2]\boxtimes(L[2]\boxtimes L[2])\\
 &\xrightarrow{\mathcal{A}}(L[2]\boxtimes L[2])\boxtimes L[2]\xrightarrow{p_1\boxtimes{\rm Id}} L[1]\boxtimes L[2]\xrightarrow{l}L[2],
\end{align*}
where $l$, $r$ and $\mathcal{A}$ are the left unit, right unit and the associativity isomorphisms, respectively.
Since $L[2]$ is simple and rigid, the morphism is a nonzero scalar multiple $\mathfrak{R}\cdot{\rm Id}_{L[2]}$. We need to compute the scalar $\mathfrak{R}$ and rescale $p_1$ and $i_1$ accordingly such that $\mathfrak{R} = 1$.

For $s=1,3$, we define intertwining operators $\mathcal{Y}^s_{22}:=p_s\circ\mathcal{Y}_\boxtimes$ of type $\binom{L[s]}{L[2]\,L[2]}$ and 
\begin{equation*}
 \mathcal{Y}^2_{2s}:= l\circ(p_1\boxtimes{\rm Id})\circ\mathcal{A}\circ({\rm Id}\boxtimes i_s)\circ\mathcal{Y}_\boxtimes=[l\circ(p_1\boxtimes{\rm Id})\circ\mathcal{A}\circ\mathcal{Y}_\boxtimes]\circ({\rm Id}\otimes i_s)
\end{equation*}
of type $\binom{L[2]}{L[2]\,L[s]}$. In particular,
$$\mathcal{Y}^2_{21}=\mathfrak{R}\cdot(r\circ\mathcal{Y}_\boxtimes)=\mathfrak{R}\cdot\Omega(Y_{L[2]}),$$
where $\Omega(Y_{L[2]})$ is the intertwining operator of type $\binom{L[2]}{L[2]\,L[1]}$ related to the vertex operator $Y_{L[2]}$ by skew-symmetry. By \eqref{eqn:c25_coev_ev} and the definitions of the unit and associativity isomorphisms in $\mathcal{C}^s$, we have
\begin{align}
 \mathfrak{R} \cdot\langle v, &\,\Omega(Y_{L[2]})(v,1)\mathcal{Y}^1_{22}(v,x)v\rangle +\langle v,\mathcal{Y}^2_{23}(v,1)\mathcal{Y}^3_{22}(v,x)v\rangle\nonumber\\
 & =\langle v,\mathcal{Y}^2_{21}(v,1)\mathcal{Y}^1_{22}(v,x)v\rangle+\langle v,\mathcal{Y}^2_{23}(v,1)\mathcal{Y}^3_{22}(v,x)v\rangle\nonumber\\
 & =\langle v,[l\circ(p_1\boxtimes{\rm Id})\circ\mathcal{A}\circ\mathcal{Y}_\boxtimes](v,1)\mathcal{Y}_\boxtimes(v,x)v\rangle\nonumber\\
 & =\langle v,[l\circ(p_1\boxtimes{\rm Id})\circ\mathcal{Y}_\boxtimes](\mathcal{Y}_\boxtimes(v,1-x)v,x)v\rangle\nonumber\\
 \label{eqn}& =\langle v, Y_{L[2]}(\mathcal{Y}_{22}^1(v,1-x)v,x)v\rangle,
\end{align}
where $v$ is a nonzero highest weight vector in $L[2]$ and we take $x$ to be any real number in the interval $(\frac{1}{2},1)$. We can rescale $p_1$ (and correspondingly $i_1$) if necessary to ensure the following:
\begin{align}
\langle v, \Omega(Y_{L[2]})(v,1)\mathcal{Y}^1_{22}(v,x)v\rangle &\in x^{-2h[1,2]}\big( 1+ x\,\C[[x]]\big),\label{phi2}\\
\langle v,\mathcal{Y}^2_{23}(v,1)\mathcal{Y}^3_{22}(v,x)v\rangle & \in x^{h[1,3]-2h[1,2]}\C[[x]],\label{phi1}\\
\langle v, Y_{L[2]}(\mathcal{Y}_{22}^1(v,1-x)v,x)v\rangle &\in (1-x)^{-2h[1,2]}\left(1+\frac{1-x}{x}\C\left[\left[\frac{1-x}{x}\right]\right]\right).\label{psi}
\end{align}
Recall that $h[p,r]$ is the lowest conformal weight of $L[p,r]$. In particular, $h[1,2]=-\frac{1}{2}$ and $h[1,3]=-1$.

Now using the fact that the singular vector $W(-1)v$ and the subsingular vector $$\left(L(-1)^2+\frac{6}{c}W(-2)\right)v$$ are equal to $0$ in $L[2]$, the above products and iterates of intertwining operators are solutions to the differential equation $\phi''(x)=0$ (see for example \cite[Sec.~4.1]{CMY2} for the details). We take a fundamental set of solutions to be 
\[
\phi_1(x) = 1, \qquad \phi_2(x) = x.
\]
Then it is clear that \eqref{phi2} equals $\phi_2(x)$ and \eqref{phi1} is some linear combination $a\,\phi_1(x)+b\,\phi_2(x)$, so by \eqref{eqn},
\begin{equation*}
 \langle v, Y_{L[2]}(\mathcal{Y}_{22}^1(v,1-x)v,x)v\rangle =a\,\phi_1(x)+(\mathfrak{R}+b)\phi_2(x)
\end{equation*}
for $x$ such that both sides converge. Then from \eqref{psi}, the iterate on the left side here is a solution $\psi(x)$ to the differential equation such that $(1-x)^{-1}\psi(x)$ is analytic near $x=1$, with constant term $1$ when expanded as a series in $\frac{1-x}{x}$. The only solution with this property is $\phi_1(x)-\phi_2(x)$, so we have $a=1$ and $\mathfrak{R}+b=-1$.

Now we need to compute $b$ in order to obtain $\mathfrak{R}$. We first note that from the $L_0$-conjugation formula,
\begin{align*}
 \phi_1(x)+b\,\phi_2(x) & =\left\langle v,\mathcal{Y}_{23}^2(v,1)\mathcal{Y}_{22}^3(v,x)v\right\rangle\nonumber\\
 & =\left\langle v,\mathcal{Y}^2_{23}(v,1)x^{L_0-2h[1,2]}\mathcal{Y}^3_{22}(v,1)v\right\rangle\nonumber\\
 & =\sum_{n\geq 0}\left\langle v,\mathcal{Y}^2_{23}(v,1)\pi_n(\mathcal{Y}^3_{22}(v,1)v)\right\rangle x^{n},
\end{align*}
where $\pi_n$ denote the projection of $L[3]$ to $L[3]_{(n-1)}$, the subspace of conformal weight $n-1$. Denote $$c_n=\left\langle v,\mathcal{Y}^2_{23}(v,1)\pi_n(\mathcal{Y}^3_{22}(v,1)v)\right\rangle,$$ because $\phi_1(x) = 1$ and $\phi_2(x) = x$, we have $c_0=1$ and $c_1=b$. We will proceed to calculate $c_1$ using $c_0$.

Next we compute $\pi_1(\mathcal{Y}_{22}^3(v,1)v)$ in terms of the lowest weight vector $v_{3}=\pi_0(\mathcal{Y}_{22}^3(v,1)v)$ of $L[3]$. Let $\langle\cdot,\cdot\rangle$ denote the unique invariant bilinear form on $L[3]$ such that $\langle v_3,v_3\rangle =1$; it is nondegenerate on the subspace $L[3]_{(0)}$ which is 1-dimensional with basis $\lbrace L_{-1}v_3\rbrace$. 
Now using the commutator formula,
\begin{align*}
 \left\langle L_{-1} v_3, \mathcal{Y}^3_{22}(v,1)v\right\rangle & =\left\langle v_3, L_1\mathcal{Y}^3_{22}(v,1)v\right\rangle\nonumber\\
 & =\left\langle v_3,\mathcal{Y}^3_{22}((L_{-1}+2L_0)v,1)v\right\rangle\nonumber\\
 & =\left\langle v_3,[L_0,\mathcal{Y}^3_{22}(v,1)]v\right\rangle+h[1,2]\langle v_3,v_3\rangle\nonumber\\
 & =h[1,3] = -1.
\end{align*}
Also, because $\langle L_{-1} v_3,L_{-1}v_3\rangle = -2$,
it is easy to show
\begin{align*}
 \pi_1(\mathcal{Y}^3_{22}(v,1)v) & =\frac{1}{2}L_{-1}v_3.
\end{align*}
Now we can compute
\begin{align*}
 c_1 & =\left\langle v,\mathcal{Y}^2_{23}(v,1)\pi_1(\mathcal{Y}^3_{22}(v,1)v)\right\rangle\nonumber\\
 & =\frac{1}{2}\left\langle v,\mathcal{Y}^2_{23}(v,1)L_{-1}v_3\right\rangle =-\frac{1}{2}\left\langle v,\mathcal{Y}^2_{23}(L_{-1}v,1)v_3 \right\rangle\nonumber\\
 & =-\frac{1}{2}\left\langle v, \left[(\mathrm{ad}\,L_0-h[1,2])\mathcal{Y}^2_{23}(v,1)\right]v_3 \right\rangle\nonumber\\
 & =\frac{1}{2}h[1,3]\left\langle v,\mathcal{Y}^2_{23}(v,1)v_3\right\rangle =-\frac{1}{2}c_0 = -\frac{1}{2}.
\end{align*}
Thus $b = c_1 = -\frac{1}{2}$ and the rigidity scalar $\mathfrak{R}$ is
\begin{equation*}
 \mathfrak{R}=-1-b=-\frac{1}{2}.
\end{equation*}
Therefore, we have to rescale the evaluation to be $-2\,p_1$ and keep the coevaluation $i_1$. The intrinsic dimension would equal $-2$.
\end{proof}

\begin{remark}
    Proposition \ref{dim} also shows that the generating object $L[2]$ is rigid, then using the fusion rules, we also obtain that all the simple objects are rigid.
\end{remark}

Now we go back to the tensor equivalence $\cC^s \cong \cC(q_{\cC}, \mathfrak{sl}_2)$ for some $q_\cC \in \C$ under the functor $L[2] \mapsto L_{q_\cC}(1)$. Because intrinsic dimension is a tensor invariant, 
\[
d(L[2]) = -2 = d(L_{q_\cC}(1)) = -q_\cC - q_{\cC}^{-1}.
\]
It follows that $q_{\cC} = 1$. Therefore, we prove
\begin{theorem}
    There are tensor equivalences $$\cC^s \cong \cC(1, \mathfrak{sl}_2) \cong {\rm Rep}\; \mathfrak{sl}_2.$$
\end{theorem}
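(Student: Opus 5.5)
The plan is to combine the Kazhdan--Wenzl reconstruction theorem with the intrinsic dimension computed in Proposition~\ref{dim}.

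\emph{Hypotheses of Kazhdan--Wenzl.} By Theorem~\ref{thm:main-fusion-ring}, $\cC^s$ is a semisimple tensor subcategory of $\cC$. It is closed under $\boxtimes$ because the fusion products of the $L[r]$ are again direct sums of $L[r]$'s. It is generated by $L[2]$, since by Proposition~\ref{thm:fusion-with-fundamental} every $L[r]$ is a summand of $L[2]^{\boxtimes (r-1)}$. Its fusion ring is that of Rep $\mathfrak{sl}_2$ under $L[r]\mapsto V_{r-1}$, the $r$-dimensional irreducible $\mathfrak{sl}_2$-module. By the rigidity theorem of the previous section, together with the remark after Proposition~\ref{dim}, every $L[r]$ is rigid. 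Duals of simple objects are again simple objects of $\cC^s$: indeed $L[r]^*\cong L[r]$, because $L[1]$ occurs in $L[r]\boxtimes L[r]$ and in no $L[r]\boxtimes L[s]$ with $s\ne r$. Hence $\cC^s$ is a rigid semisimple tensor category.

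\emph{Reconstruction.} Applying \cite[Theorem~$A_\infty$]{KW}, we obtain $q_\cC$ with $q_\cC=\pm1$ or $q_\cC$ not a root of unity, together with a tensor equivalence $F:\cC^s\to\cC(q_\cC,\mathfrak{sl}_2)^\tau$ satisfying $F(L[2])=L_{q_\cC}(1)$. By the same theorem, $\cC(q_\cC,\mathfrak{sl}_2)^\tau\cong\cC(-q_\cC,\mathfrak{sl}_2)$, so after possibly replacing $q_\cC$ by $-q_\cC$ we have a tensor equivalence $\cC^s\cong \cC(q,\mathfrak{sl}_2)$ sending $L[2]$ to $L_q(1)$. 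The twist must be absorbed carefully. Twisting by $\tau$ flips the sign of the associator on $L(1)^{\otimes3}$, which flips the sign of the intrinsic dimension of $L(1)$. Thus $-q-q^{-1}$ for $\cC(q,\mathfrak{sl}_2)^\tau$ becomes $q+q^{-1}=-(-q)-(-q)^{-1}$, consistent with the identification with $\cC(-q,\mathfrak{sl}_2)$.

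\emph{Pinning down $q$.} The intrinsic dimension is a tensor invariant, so Proposition~\ref{dim} gives
\[
-2=d(L[2])=d(L_q(1))=-q-q^{-1}.
\]
Hence $(q-1)^2=0$, i.e.\ $q=1$. This value is allowed by Kazhdan--Wenzl, so $\cC^s\cong\cC(1,\mathfrak{sl}_2)$. Finally, at $q=1$ Lusztig's modified form specializes to $U(\mathfrak{sl}_2)$ with its trivial associator, so $\cC(1,\mathfrak{sl}_2)\cong$ Rep $\mathfrak{sl}_2$ as tensor categories.

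\emph{Main obstacle.} I expect the delicate step to be the normalization of $d(L[2])$. The intrinsic dimension $e\circ i$ is invariant under rescaling a pair $(e,i)$ that satisfies the zigzag identity, so it is well defined. However, one must check that Proposition~\ref{dim} really computes $e\circ i$ with the conventions of \cite{EGNO}, for which $d(L_q(1))=-q-q^{-1}$. The proposition rescales to $e=-2p_1$ and $i=i_1$, giving $e\circ i=-2\,p_1\circ i_1=-2$. With this matched, no sign ambiguity between $q$ and $-q$ remains.
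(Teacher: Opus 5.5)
Your proposal is correct and follows essentially the same route as the paper. Both arguments apply Kazhdan--Wenzl's Theorem $A_\infty$ to the rigid semisimple category $\cC^s$ generated by $L[2]$, then use $d(L[2])=-2$ from Proposition~\ref{dim} together with $d(L_q(1))=-q-q^{-1}$ to force $q=1$. Your additional checks fill in details the paper leaves implicit without changing the argument: the self-duality $L[r]^*\cong L[r]$, and the observation that the $\tau$-twist flips the sign of the intrinsic dimension, consistently with $\cC(q,\mathfrak{sl}_2)^\tau\cong\cC(-q,\mathfrak{sl}_2)$.
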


The category $\cC^s$ is also related to the Virasoro tensor categories via its equivalence to Rep $\mathfrak{sl}_2$. In \cite{MY2}, McRae and the fourth named author studied the category $\mathcal{O}_p$ of finite length modules for the Virasoro algebra at central charge $c_{1,p} = 13 - 6p -\frac{6}{p}$ whose simple objects are labeled by $\mathcal{L}_{r,s}$ for $r\in \N$ and $1 \leq s \leq p$. Let $\mathcal{O}_p^L$ denote the subcategory consisting of direct sums of the simple objects $\mathcal{L}_{r,1}$ for $r \in \N$. Then the proof of \cite[Theorem~4.3]{MY2} says $\mathcal{O}_p^L$ is equivalent to Rep $\mathfrak{sl}_2$ if $p$ is even; and equivalent to $({\rm Rep}\; \mathfrak{sl}_2)^\tau$ if $p$ is odd. It follow that
\begin{corollary}
    The category $\cC^s$ is equivalent to the Virasoro tensor category $\mathcal{O}_p^L$ for even $p$.
\end{corollary}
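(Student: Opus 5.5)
The corollary follows by composing two tensor equivalences, each with Rep $\mathfrak{sl}_2$. The first is the theorem just proved, $\cC^s \cong \cC(1,\mathfrak{sl}_2)\cong {\rm Rep}\;\mathfrak{sl}_2$, under a functor sending $L[2]$ to the two-dimensional standard representation. The second, for even $p$, is the equivalence $\mathcal{O}_p^L \cong {\rm Rep}\;\mathfrak{sl}_2$ extracted from the proof of \cite[Theorem~4.3]{MY2}, which sends the generating object $\mathcal{L}_{2,1}$ to the standard representation. Composing the first equivalence with a quasi-inverse of the second gives a tensor equivalence $\cC^s \to \mathcal{O}_p^L$ with $L[r]\mapsto \mathcal{L}_{r,1}$. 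A composite of monoidal equivalences is again a monoidal equivalence, so the only real content is checking that both equivalences land in the \emph{same} tensor category, namely untwisted ${\rm Rep}\;\mathfrak{sl}_2$ and not its $\tau$-twist.

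To make this check transparent, I would redo the comparison intrinsically through the Kazhdan--Wenzl classification \cite[Theorem~$A_\infty$]{KW}. Both $\cC^s$ and $\mathcal{O}_p^L$ are rigid semisimple tensor categories, each generated by a simple object ($L[2]$, respectively $\mathcal{L}_{2,1}$) with $\mathfrak{sl}_2$ Clebsch--Gordan fusion rules. For $\cC^s$ this is Theorem~\ref{thm:main-fusion-ring} together with the rigidity theorem; for $\mathcal{O}_p^L$ it is \cite{MY2}. By \cite{KW}, each is therefore equivalent to $\cC(q,\mathfrak{sl}_2)$ for a $q$ determined by the intrinsic dimension $d = -q-q^{-1}$ of the generator. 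This dimension also absorbs the $\tau$-ambiguity, because $\cC(q,\mathfrak{sl}_2)^\tau\cong\cC(-q,\mathfrak{sl}_2)$. It therefore suffices to show that the generators have equal intrinsic dimensions.

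Proposition~\ref{dim} gives $d(L[2])=-2$. For $\mathcal{O}_p^L$, the analogous computation in \cite[Theorem~4.1]{MY1} or \cite{MY2} gives $d(\mathcal{L}_{2,1}) = -q-q^{-1}$ with $q=e^{\pi i p}$, which is $-2$ when $p$ is even and $+2$ when $p$ is odd. For even $p$ both categories correspond to $q=1$, which yields the equivalence and also explains why odd $p$ is excluded.

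The main obstacle is pinning down the sign conventions in \cite{MY2}: whether the dimension of $\mathcal{L}_{2,1}$ is $-2$ or $+2$ for even $p$, and whether a possible rescaling of the braiding or associativity affects the identification. I would first verify this directly from the formula for the intrinsic dimension in \cite{MY1,MY2} and match it with the normalization used in Proposition~\ref{dim}.
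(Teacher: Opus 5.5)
Your proposal is correct and follows essentially the same approach as the paper: the paper also composes $\cC^s\cong\cC(1,\mathfrak{sl}_2)\cong{\rm Rep}\,\mathfrak{sl}_2$ with the equivalence $\mathcal{O}_p^L\cong{\rm Rep}\,\mathfrak{sl}_2$ for even $p$ (versus $({\rm Rep}\,\mathfrak{sl}_2)^\tau$ for odd $p$) taken from the proof of \cite[Theorem~4.3]{MY2}. Your Kazhdan--Wenzl cross-check is consistent with that cited result, since it amounts to comparing the intrinsic dimension of the generator: $d(L[2])=-2$ on one side, and $-2$ (even $p$) versus $+2$ (odd $p$) for $\mathcal{L}_{2,1}$ on the other.
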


\bigskip

\subsection*{Acknowledgement}    
 D. Adamovi\'c  and G. Radobolja were  partially supported by  Croatian Science Foundation under the project IP-2022-10-9006  and by the project “Implementation of cutting-edge research and its application as part of the Scientific Center of
Excellence for Quantum and Complex Systems, and Representations of Lie Algebras", Grant No. PK.1.1.10.0004, co-financed by the European Union through the
European Regional Development Fund - Competitiveness and Cohesion Programme 2021-2027.

G. Radobolja was partially supported by the European Union (Next\-GenerationEU) under the Croatian Recovery and Resilience Plan 2021\-–\-2026 (NRRP), through the University of Split institutional project “Mathematical modelling and simulations of physical systems (MaMoS) IP-UNIST-46”, approved by the Ministry of Science, Education and Youth of the Republic of Croatia.

J. Yang and M. Peng was supported by China NSF grant 12371030 and a startup grant from Shanghai Jiao Tong University.


\begin{thebibliography}{}

\bibitem{AR1} D. Adamovi\'{c}, G. Radobolja, Free field realization of the twisted Heisenberg--Virasoro algebra at level zero and its applications, {\em J. Pure Appl. Alg.} {\bf 219} (2015), no. 10, 4322--4342; arXiv:1405.1707.

\bibitem{AR2} D. Adamovi\' c, G. Radobolja, On free field realization of $W(2,2)$-modules, {\em SIGMA} {\bf 12} (2016), 113, 13 pp; arXiv:1605.08608.

\bibitem{AR3} D. Adamovi\'{c}, G. Radobolja, Self-dual and logarithmic representations of the twisted Heisenberg-Virasoro algebra at level zero, {\em Comm. Contemp. Math.} {\bf 21} (2019), no. 02, 1850008; arXiv:1703.00531 [math.QA].

\bibitem{ALY} D. Adamovi\'{c}, X. Lin, J. Yang, Tensor category of $\Z_2$-orbifold of Heisenberg vertex operator algebra and its applications, arXiv: 2604.12120.


\bibitem{Billig} Y. Billig, Representations of the twisted Heisenberg-Virasoro algebra at level zero, {\em Canadian Math. Bulletin}, {\bf 46}
(2003), 529--537, arXiv:0201314v1.

\bibitem{CMY2}
T. Creutzig,  R.  McRae, J. Yang, On ribbon categories for singlet vertex algebras, {\em Comm. Math.
Phys.} {\bf 387} (2021), 865–925.

\bibitem{C.et.al}
T. Creutzig, C. Jiang, F. Orosz Huniker, D. Ridout, J. Yang,
Tensor categories arising from the Virasoro algebra,
{\em Adv.\ Math.} {\bf 380} (2021), 107621.

\bibitem{EGNO} P. Etingof, S. Gelaki, D. Nikshych and V. Ostrik, \textit{Tensor Categories},  Mathematical Surveys and Monographs, \textbf{205}, American Mathematical Society, Providence, RI, 2015, xvi+343 pp.

\bibitem{EP} 
P. Etingof and  D. Penneys,  Rigidity of non-negligible objects of moderate growth in braided categories, {\em Forum Math., Pi} {\bf 14} (2026), Paper No. e7, 18 pp.

\bibitem{FZ}
I.~B.~Frenkel and Y.-C.~Zhu, Vertex operator algebras associated to representations of affine
and Virasoro algebras, \textit{Duke Math. J.} \textbf{66} (1992), no.~1, 123--168.

\bibitem{H}
Y.-Z. Huang, $C_1$-cofiniteness and vertex tensor categories, arXiv:2509.20737.

\bibitem{JLPZ} 
W. Jiang, D. Liu, Y. Pei, and K. Zhao, Singular vectors, characters, and composition series for the $N=1$ BMS superalgebra, arxiv:2412.17000.

\bibitem{JZ} 
W. Jiang, and W. Zhang, Verma modules over the $W(2,2)$ algebras, {\em J. Geom. Phys.} {\bf 98} (2015), 118--127.

\bibitem{KW}
D. Kazhdan and H. Wenzl, Reconstructing monoidal categories, {\em I. M. Gel’fand Seminar}, 111–136,
Adv. Soviet Math., {\bf 16}, Part 2, Amer. Math. Soc., Providence, RI, 1993.

\bibitem[L]{L}
G. Lusztig, {\em Introduction to Quantum Groups}, Progress in Mathematics, \textbf{110}, Birkh\"{a}user Boston, Inc., Boston, MA, 1993, xii+341 pp.


\bibitem{Li} 
H.~Li, Determining fusion rules by $A(V)$-modules and bimodules,
\textit{J. Alg.} \textbf{212} (1999), no.~2, 515--556.

\bibitem{Miy}  
M. Miyamoto, $C_1$-cofiniteness and fusion products of vertex operator algebras, {\em Math. Lect. Peking
Univ.} Springer, Heidelberg, 271–279 (2014); arXiv:1305.3008.

\bibitem{MY1}
R. McRae and J. Yang, An $\mathfrak{sl}_2$-type tensor category for the Virasoro algebra at central charge 25 and applications, {\em Math. Z.}, {\bf 303} (2023), no. 2, Paper No. 32, 40 pp.

\bibitem{MY2}
R. McRae and J. Yang, Structure of Virasoro tensor categories at central charge $13-6p-6p^{-1}$ for integers $p>1$, {\em Trans. Amer. Math. Soc.} {\bf 78} (2025), no. 10, 7451-7509; arXiv:2011.02170.

\bibitem{R} 
G. Radobolja, Subsingular vectors in Verma modules, and tensor product of weight modules over the twisted Heisenberg--Virasoro algebra and $W(2,2)$ algebra, {\em J. Math. Phys.} {\bf 54} (2013), 071701; arXiv:1302.0801

\bibitem{Zhang-Dong} 
W. Zhang and C. Dong, $W$-algebra $W(2,2) $ and the vertex operator algebra $L(\frac{1}{2},0) \otimes L( \frac{1}{2},0)$, {\em Comm. Math. Phys}. {\bf 285} (2009), 991--1004.
\end{thebibliography}
\end{document}